\newtheorem{thm}{Theorem}[section]
\newtheorem{Lemma}[thm]{Lemma}
\newtheorem{cor}[thm]{Corollary}
\long\def\@makecaption#1#2{%
 \vskip\abovecaptionskip
  \sbox\@tempboxa{{#1.}\quad #2}%
   \ifdim \wd\@tempboxa >\hsize
    { #1.}\quad #2\par
     \else
  \global \@minipagefalse
   \hb@xt@\hsize{\hfil\box\@tempboxa\hfil}%
   \fi
   \vskip\belowcaptionskip}
\title{On the restricted matching of graphs in surfaces\footnote{This
 work was supported by NSFC (grant no. 10831001).}}
\author{Qiuli Li, Heping Zhang\footnote{The
Corresponding author.} \\
\small{School of Mathematics and Statistics, Lanzhou University,
Lanzhou, Gansu 730000, P. R. China}\\
\small{E-mail addresses:  liql06@lzu.cn, zhanghp@lzu.edu.cn}}
\date{}
\begin{document}
\maketitle
\begin{abstract} A connected graph $G$ with at
least $2m+2n+2$ vertices is said to have property $E(m,n)$ if, for
any two disjoint matchings $M$ and $N$ of size $m$ and $n$
respectively, $G$ has a perfect matching $F$ such that $M\subseteq
F$ and $N\cap F=\varnothing$. In particular, a graph with $E(m,0)$
is $m$-extendable. Let $\mu(\Sigma)$ be the smallest integer $k$
such that no graphs embedded on a surface $\Sigma$ are
$k$-extendable. Aldred and Plummer have proved that no graphs
embedded on the surfaces $\Sigma$ such as the sphere, the projective
plane, the torus, and the Klein bottle are
 $E(\mu(\Sigma)-1,1)$.  In this paper, we show that
this result always holds for any surface. Furthermore, we obtain
that if a graph $G$ embedded on a surface has sufficiently many
vertices, then $G$ has no property $E(k-1,1)$ for each integer
$k\geq 4$, which implies that $G$ is not $k$-extendable. In the case
of $k=4$, we get immediately a main result that  Aldred et al.
recently obtained. \vspace{0.3cm}

\noindent {\em Keywords:} Perfect matching; Restricted matching;
Extendability; Graphs in surface.

\noindent{\em AMS 2000 subject classification:} 05C70
\end{abstract}

\section{Introduction}
A {\em matching} of a graph $G$ is a set of independent edges of $G$
and a matching is called {\em perfect} if it covers all vertices of
$G$. A connected graph $G$ with at least $2m+2n+2$ vertices is said
to have property $E(m,n)$ (or abbreviated as $G$ is $E(m,n)$) if,
for any two disjoint matchings $M$, $N \subseteq E(G)$ of size $m$
and $n$ respectively, there is a perfect matching $F$ such that
$M\subseteq F$ and $N\cap F=\varnothing$. It is obvious that a graph
with $E(0,0)$ has a perfect matching. Since
  properties $E(m,0)$ and $m$-extendability are
equivalent,  property $E(m,n)$ is somewhat a generalization of
$m$-extendability. The concept of $m$-extendable graphs was
gradually evolved from the study of elementary bipartite graphs and
matching-covered graphs (i.e. each edge belongs to a perfect
matching ) and introduced by M.D. Plummer \cite{Plummer80} in 1980.
For extensive studies on $m$-extendable graphs,  see two surveys
\cite{Plummer94} and \cite{Plummer96}. A basic property is stated as
follows.

\begin{Lemma}\label{n+1connected}(\cite{Plummer80})
Every $m$-extendable graph is $(m+1)$-connected.
\end{Lemma}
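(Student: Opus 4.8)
The plan is to argue by contradiction via Tutte's $1$-factor theorem. The key reduction is that for any matching $M$ with vertex set $V(M)$, a perfect matching $F$ with $M\subseteq F$ exists if and only if $G-V(M)$ has a perfect matching; hence to contradict $E(m,0)$ it suffices to exhibit a \emph{single} matching $M$ of size $m$ for which $G-V(M)$ has no perfect matching, i.e. (by Tutte) for which $o\big((G-V(M))-X\big)>|X|$ for some $X\subseteq V(G)\setminus V(M)$, where $o(\cdot)$ denotes the number of odd components. First I would record the easy structural facts: $G$ is connected and $|V(G)|\ge 2m+2$ by hypothesis, and $|V(G)|$ is even since $G$ has property $E(0,0)$ and so possesses a perfect matching.

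Now suppose, for contradiction, that $G$ is not $(m+1)$-connected, and let $S$ be a minimal vertex cut with $k:=|S|\le m$. Write the components of $G-S$ as $G_1,\dots,G_r$ with $r\ge 2$; by minimality of $S$, every vertex of $S$ has a neighbour in at least two of these components. The heart of the argument is to choose $M$ so as to manufacture a Tutte violation in $G-V(M)$. The guiding principle, read off from parity, is that matching a vertex of $S$ into an \emph{even} component of $G-S$ converts that component into an odd residual component while simultaneously consuming one vertex of the prospective separator $X=S\setminus V(M)$, thereby pushing $o(\cdot)-|X|$ upward by two; dually, when $S$ spans few edges one can instead cover the $S$-neighbourhoods inside the components and strand the vertices of $S$ themselves as isolated, hence odd, components. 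I would organise this into cases according to whether some component of $G-S$ is odd (strand it by saturating $S$ away from it) or all components are even (flip one to odd, or saturate the components' contact with $S$ so as to isolate $S$), in each case producing $o\big((G-V(M))-X\big)>|X|$.

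Finally I would inflate the constructed obstruction matching to have size exactly $m$: since $|V(G)|\ge 2m+2$ there is room to add further independent edges in a region disjoint from the stranded odd part, and one checks that these padding edges neither repair the Tutte deficiency nor meet the separator $X$. The resulting $M$ then has $|M|=m$ yet admits no perfect completion, contradicting $E(m,0)$.

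I expect the main obstacle to be this last reconciliation: guaranteeing that a matching realising the desired parity pattern actually \emph{exists} as a set of independent edges with the correct incidences. This is exactly where minimality of $S$ must be used (to supply edges from $S$ into the chosen components) together with the bound $|V(G)|\ge 2m+2$ (to supply enough vertices for both the saturating edges and the padding), and where one must verify that the two regimes, $S$ edge-rich versus $S$ nearly independent, are both handled without the padding inadvertently destroying the surplus of odd components.
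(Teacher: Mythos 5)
The paper itself does not prove this lemma -- it is quoted from Plummer's 1980 paper -- so your attempt has to stand on its own merits, and as written it does not: it is a plan whose decisive step is deferred rather than carried out. Everything you do establish (the reduction ``$M$ extends to a perfect matching iff $G-V(M)$ has one,'' the parity bookkeeping showing that matching a vertex of $S$ into an even component raises $o(\cdot)-|X|$ by two, the choice $X=S\setminus V(M)$) is the routine part. The entire content of the theorem is the existence of an \emph{independent} set of edges realizing your parity pattern, and you yourself flag this as an ``expected obstacle'' instead of resolving it. Concretely, in your first regime (some component $C_j$ of $G-S$ is odd) you need a matching saturating $S$ by edges into $V(G)\setminus(S\cup C_j)$; minimality of $S$ only gives each vertex of $S$ \emph{some} neighbour in each component, and Hall's condition for saturating $S$ can fail outright -- for instance all $k$ vertices of $S$ may have a single common neighbour $w$ in the unique other component. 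Your second regime is supposed to catch this, but ``cover the $S$-neighbourhoods inside the components'' is infeasible in general: $|N(S)\setminus S|$ is not bounded in terms of $m$ (a vertex of $S$ may have arbitrarily many neighbours in each component), so no matching of size $m$ can strand the vertices of $S$ as isolated odd components. The genuine cases are exactly those where Hall's condition fails, and your proposal has no mechanism for them.

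This is why the known proofs need more machinery than a one-shot Tutte violation. Plummer's argument leans on the companion theorem that an $m$-extendable graph is $(m-1)$-extendable (Lemma \ref{E(m,n)--E(m-1,n)} of this paper with $n=0$), proceeds by induction on $m$, takes $S$ to be a \emph{minimum} cut (so that $|S|$ is exactly the connectivity and every vertex of $S$ has a neighbour in every component), and uses the fact that deleting the two ends of an edge from an $m$-extendable graph leaves an $(m-1)$-extendable graph; the Hall-failure configurations above are then killed by the induction hypothesis (e.g.\ a common neighbour $w$ of all of $S$ produces a smaller cutset or a low-degree vertex in a graph already known to be highly connected). Your padding step has a similar unaddressed issue -- you must show the extra $m-k$ independent edges exist at all, which is not automatic from $|V(G)|\ge 2m+2$ in a graph you are assuming nothing about beyond the contradiction hypothesis. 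So the skeleton is sensible, but the proof, as the construction stands, has a hole exactly where the theorem lives.
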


For a vertex $v$ of a graph $G$, let $N(v)$ denote the neighborhood
of $v$, i.e., the set of vertices adjacent to $v$ in $G$, and
$G[N(v)]$ the subgraph of $G$ induced by $N(v)$.
\begin{Lemma}\label{G[N(v)]}(\cite{Dean92})
 Let $v$ be a vertex of degree $m+t$ in an $m$-extendable graph
$G$. Then $G[N(v)]$ does not contain a matching of size $t$.
\end{Lemma}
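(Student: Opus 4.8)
Proof proposal.

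The plan is to argue by contradiction: assume $G[N(v)]$ contains a matching $M_0=\{x_1y_1,\dots,x_ty_t\}$ of size $t$, and derive a violation of $m$-extendability. First I record two easy reductions. Since $G$ is $m$-extendable, Lemma~\ref{n+1connected} shows that every vertex has degree at least $m+1$, so from $\deg(v)=m+t\ge m+1$ we get $t\ge 1$; and since a matching of size $t$ needs $2t$ vertices while $|N(v)|=m+t$, such a matching can exist only if $m\ge t$. Hence I may assume $1\le t\le m$.

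The driving idea is to build a matching $M$ of size exactly $m$ that saturates every vertex of $N(v)$ but misses $v$ itself. Once such an $M$ is in hand, $m$-extendability produces a perfect matching $F\supseteq M$; but then each neighbour of $v$ is already matched by $M\subseteq F$ to a vertex different from $v$, so $v$ is left unsaturated by $F$, contradicting that $F$ is perfect. The matching $M$ will consist of the $t$ edges of $M_0$ (saturating the $2t$ vertices $x_i,y_i$) together with $m-t$ further independent edges saturating the remaining neighbours $z_1,\dots,z_{m-t}$, where $\{z_1,\dots,z_{m-t}\}=N(v)\setminus V(M_0)$, none of these extra edges touching $v$ or $V(M_0)$.

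The case $t=m$ is immediate and illustrates the mechanism: here $M_0$ already has size $m$ and saturates all $2m=m+t$ neighbours of $v$, so $M_0$ itself is the desired blocking matching and the contradiction follows at once. The substance of the proof is the case $t<m$, where one must actually produce the $m-t$ extra edges saturating $z_1,\dots,z_{m-t}$; this is the main obstacle, since the minimum-degree bound alone does not force these neighbours to be matchable outside $\{v\}\cup N(v)$. I would resolve it by fixing a perfect matching $F_0$ of $G$ (which exists because $G$ is $m$-extendable) and examining the components of the symmetric difference $M_0\triangle F_0$, which are paths and even cycles alternating between $M_0$ and $F_0$. Using the $(m+1)$-connectivity from Lemma~\ref{n+1connected} to reroute along suitable $F_0$-alternating paths that avoid $v$ and $V(M_0)$, I would modify $F_0$ into a matching that contains $M_0$, saturates all of $N(v)$, and leaves $v$ exposed; extracting a size-$m$ sub-matching — the retained edges of $M_0$ supply the $t$ edges internal to $N(v)$ that are needed to cover $N(v)$ within the budget of $m$ edges — then yields the required $M$. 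Verifying that this rerouting can always be carried out under only $(m+1)$-connectivity, and in particular that the exposed vertex can be pushed off $N(v)$, is the delicate point on which the whole argument turns.
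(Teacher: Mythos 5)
Your two opening reductions and the overall blocking strategy are sound, and the case $t=m$ is genuinely complete: there $M_0$ saturates all of $N(v)$, so extending it to a perfect matching (possible since $G$ is $m$-extendable) strands $v$. The trouble is that for $1\le t<m$ --- what you yourself call ``the substance of the proof'' --- you give a plan rather than a proof. Producing the $m-t$ extra independent edges that saturate $Z=N(v)\setminus V(M_0)$ while avoiding $v$ and $V(M_0)$ is exactly the crux of the lemma, and your proposed route (fix a perfect matching $F_0$, form $M_0\triangle F_0$, then ``reroute along suitable $F_0$-alternating paths'' using the $(m+1)$-connectivity of Lemma~\ref{n+1connected}) is never carried out; you explicitly flag it as an unverified delicate point. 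And it is genuinely delicate: once you have committed to $M_0$ and to edges covering all but one vertex of $Z$, the last vertex $z$ may have \emph{all} of its at least $m+1$ neighbours inside the forbidden set $\{v\}\cup V(M_0)\cup(\text{vertices already matched})$, a set of size up to $2m-1$; so no degree or connectivity count closes the argument, and nothing in your sketch shows that alternating switches can simultaneously keep $M_0$ intact, keep $N(v)$ saturated, and push the exposed vertex onto $v$ rather than onto some other vertex of $N(v)$. As it stands, the main case is unproved.

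For comparison: the paper itself gives no proof (the lemma is quoted from Dean), and Dean's argument avoids your construction problem entirely by working with deletion rather than augmentation. The key is Plummer's observation that if $G$ is $m$-extendable and $e\in E(G)$, then $G-V(e)$ is $(m-1)$-extendable: any matching of size $m-1$ in $G-V(e)$ together with $e$ is a matching of size $m$ in $G$, hence extends to a perfect matching of $G$, and deleting $e$ from that perfect matching leaves a perfect matching of $G-V(e)$ containing the given one (the connectedness demanded by the definition follows inductively via Lemma~\ref{n+1connected}). Applying this $t$ times, once for each edge of $M_0\subseteq G[N(v)]$, shows that $G-V(M_0)$ is $(m-t)$-extendable; but in $G-V(M_0)$ the vertex $v$ has degree exactly $(m+t)-2t=m-t$. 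If $t<m$ this contradicts Lemma~\ref{n+1connected}, which forces minimum degree at least $m-t+1$; if $t=m$ it isolates $v$ in a graph that has a perfect matching. Your ``delicate point'' thus becomes a one-line degree count; conversely, if you insist on the direct augmentation route, you would in effect have to reprove this deletion lemma anyway.
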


Porteous and Aldred \cite{M. Porteous96} introduced the concept of
property  $E(m,n)$ and  focussed on when the implication
$E(m,n)\rightarrow E(p,q)$ does and does not hold. From then on, the
possible implications among the properties $E(m,n)$ for various
values of $m$ and $n$ are studied in \cite{A.
McGregor-Macdonald00,M. Porteous95,M. Porteous96}. The following
three non-trivial results  will be used later.

\begin{Lemma}\label{E(m,n)--E(m,0)}(\cite{M. Porteous96})
If a graph $G$ is $E(m,n)$, then it is $E(m,0)$.
\end{Lemma}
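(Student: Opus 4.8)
The plan is to deduce $E(m,0)$ from $E(m,n)$ by manufacturing, for each prescribed matching, a companion matching to feed into the stronger property. First note that the vertex requirement is automatic: a graph with at least $2m+2n+2$ vertices certainly has the $2m+2$ vertices demanded by $E(m,0)$, and connectivity is shared. So fix an arbitrary matching $M$ of size $m$; I must produce a perfect matching $F$ with $M\subseteq F$. My strategy is to find some matching $N$ of size $n$ that is edge-disjoint from $M$ (that is, with $M\cap N=\varnothing$), and then apply $E(m,n)$ to the pair $(M,N)$. The perfect matching it returns satisfies $M\subseteq F$ (and, as a bonus I simply discard, $N\cap F=\varnothing$), which is exactly the conclusion of $E(m,0)$.

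To build $N$ I would use a perfect matching of $G$. Fix such a perfect matching $P$; since $|V(G)|\ge 2m+2n+2$ we have $|P|\ge m+n+1$. Because $M$ is a matching of size $m$, at most $m$ edges of $P$ can belong to $M$, so $P\setminus M$ is a matching, edge-disjoint from $M$, containing at least $n+1$ edges. Taking any $n$ of these edges yields the desired $N$ of size exactly $n$ with $M\cap N=\varnothing$. This is the entire construction; applying $E(m,n)$ to $(M,N)$ then finishes the argument as above, and since $M$ was an arbitrary matching of size $m$, $G$ is $E(m,0)$.

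The one ingredient this uses, and the step I expect to be the real obstacle, is the existence of the perfect matching $P$. This is precisely where the property $E(m,n)$ must be used with genuine, non-vacuous force: once $G$ admits even a single pair of edge-disjoint matchings of sizes $m$ and $n$, the property delivers a perfect matching, after which the counting above takes over. Producing one such pair is where the hypotheses $|V(G)|\ge 2m+2n+2$ and connectivity earn their keep --- they exclude the degenerate, small-matching graphs (stars being the prototype) on which $E(m,n)$ would hold only vacuously and the implication would actually fail. I would therefore first record the standard fact that an $E(m,n)$ graph has a perfect matching (equivalently $E(m,n)\Rightarrow E(0,0)$), either by citing it as part of the standing framework or by a short Tutte--Berge argument: a maximum matching omitting four or more (necessarily independent) vertices sets up an instance of the extension property that augments it, contradicting maximality. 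Granting this, the remainder of the proof is the two-line counting argument above, so I expect no further difficulty.
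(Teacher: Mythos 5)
The paper never proves this lemma---it is quoted from Porteous--Aldred \cite{M. Porteous96} as a known result---so your proposal can only be judged on its own merits. Its core counting step is fine: granted a perfect matching $P$, we have $|P|\ge m+n+1$, at most $m$ edges of $P$ lie in $M$, so $P\setminus M$ contains a matching $N$ of size $n$ with $M\cap N=\varnothing$, and applying $E(m,n)$ to $(M,N)$ yields the required $F\supseteq M$. (Note this needs ``disjoint'' to mean edge-disjoint, i.e.\ $M\cap N=\varnothing$ rather than $M\cup N$ being a matching, since edges of $P\setminus M$ may share endpoints with edges of $M$; the paper's own uses of the definition are consistent with that reading.)

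The genuine gap is exactly at the point you flagged as the crux, and your proposed repair is based on a false claim. You assert that connectivity and $|V(G)|\ge 2m+2n+2$ ``exclude the degenerate, small-matching graphs (stars being the prototype).'' They do not: the star $K_{1,2m+2n+1}$ is connected, has exactly $2m+2n+2$ vertices, and has maximum matching size $1$. Hence whenever $\max(m,n)\ge 2$ it contains no disjoint pair of matchings of sizes $m$ and $n$, so it satisfies $E(m,n)$ vacuously while having no perfect matching; your intermediate claim ``$E(m,n)\Rightarrow E(0,0)$'' is therefore unprovable from the stated hypotheses. Worse, for $m=1$, $n\ge 2$ the star is vacuously $E(1,n)$ but is not $E(1,0)$, so under the paper's literal definition the implication itself fails---no argument can close this case. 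The lemma is true only under the convention, implicit in \cite{M. Porteous96} and in how this paper applies the result, that $E(m,n)$ holds non-vacuously, i.e.\ $G$ really contains disjoint matchings of sizes $m$ and $n$. Once that is granted, no Tutte--Berge or augmentation argument is needed at all: applying $E(m,n)$ to any one valid pair instantly produces a perfect matching, after which your counting finishes the proof. Your maximum-matching sketch cannot substitute for this convention: if $\nu(G)\ge m+n$ one extracts a valid pair from a maximum matching and gets a perfect matching outright (no augmentation step occurs), while if $\nu(G)<m+n$ there is no instance of the property to set up and the statement is irreparably vacuous.
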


\begin{Lemma}\label{E(m,n)--E(m-1,n)}(\cite{M. Porteous96})
If a graph $G$ is $E(m,n)$, then it is $E(m-1,n)$.
\end{Lemma}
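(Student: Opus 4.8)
The plan is to reduce property $E(m-1,n)$ to the assumed property $E(m,n)$ by enlarging the prescribed matching by a single edge. Given disjoint matchings $M'$ and $N'$ of sizes $m-1$ and $n$, I would search for an edge $e$ whose two endpoints both avoid $V(M')$ and which does not belong to $N'$. If such an $e$ exists, then $M:=M'\cup\{e\}$ is a matching of size $m$ still disjoint from $N'$, and applying $E(m,n)$ to the pair $(M,N')$ yields a perfect matching $F$ with $M'\subseteq M\subseteq F$ and $F\cap N'=\varnothing$, which is exactly what $E(m-1,n)$ demands. Since $G$ has at least $2m+2n+2>2(m-1)+2n+2$ vertices, the vertex-count hypothesis of $E(m-1,n)$ is automatically met. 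Before the search I would record that, by Lemma~\ref{E(m,n)--E(m,0)}, $G$ is $E(m,0)$, i.e. $m$-extendable, whence by Lemma~\ref{n+1connected} it is $(m+1)$-connected and in particular has minimum degree at least $m+1$.

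The whole content therefore lies in producing the edge $e$. Write $A:=V(M')$, so $|A|=2m-2$ and $M'$ is a perfect matching of the vertex set $A$. Suppose, for contradiction, that no admissible $e$ exists; then every edge of $G-A$ lies in $N'$. As $N'$ is a matching of size $n$, the graph $G-A$ has at most $n$ edges and so covers at most $2n$ of its $|V(G)|-|A|\ge 2n+4$ vertices. Hence $G-A$ has a nonempty set $I$ of isolated vertices (indeed $|I|\ge 4$), and each $v\in I$ satisfies $N_G(v)\subseteq A$.

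The decisive step is to contradict Lemma~\ref{G[N(v)]} using such a vertex $v$. Since $N_G(v)\subseteq A$ we have $\deg_G(v)\le|A|=2m-2$, and minimum degree $m+1$ forces $\deg_G(v)=m+t$ with $1\le t\le m-2$. Because $M'$ matches $A$ perfectly, at most $|A|-|N_G(v)|$ vertices of $N_G(v)$ can be matched by $M'$ to $A\setminus N_G(v)$; hence at least $2|N_G(v)|-|A|=2t+2$ vertices of $N_G(v)$ are matched by $M'$ within $N_G(v)$, furnishing a matching of size at least $t+1$ inside $G[N_G(v)]$. This contradicts Lemma~\ref{G[N(v)]}, which forbids a matching of size $t$ in $G[N_G(v)]$. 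Thus the admissible edge $e$ must exist, completing the reduction.

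I expect the main obstacle to be exactly this last existence argument: the naive idea of counting edges or using connectivity inside $G-A$ breaks down once $m\ge 3$, because a vertex of $G-A$ may send all of its $\ge m+1$ edges into the $2m-2$ vertices of $A$ and thus be isolated in $G-A$. The realization that rescues the argument is that such an isolated vertex has its entire neighborhood inside the set $A$ that $M'$ matches perfectly, so $G[N_G(v)]$ is forced to contain a large $M'$-submatching, precisely the configuration outlawed by Lemma~\ref{G[N(v)]}. I would verify the boundary behaviour ($m=1$, or $t$ near its extremes) separately, but the inequality $t\le m-2$ coming from $N_G(v)\subseteq A$ keeps the count $2t+2$ meaningful throughout.
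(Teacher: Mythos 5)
This lemma is imported by the paper from Porteous and Aldred \cite{M. Porteous96} and is stated without any proof, so there is no internal argument to compare yours against; I can only judge the proposal on its own merits, and it is correct. The reduction itself (enlarge $M'$ by one admissible edge $e$ and invoke $E(m,n)$ on the pair $(M'\cup\{e\},N')$) is the obvious first move; the real content is your existence argument for $e$, and it checks out: if every edge avoiding $V(M')$ lay in $N'$, then $G-V(M')$, having at least $2n+4$ vertices but at most $n$ edges, would contain a vertex $v$ isolated in $G-V(M')$, i.e.\ with $N(v)\subseteq V(M')$; since $G$ is $m$-extendable (Lemma \ref{E(m,n)--E(m,0)}) and hence $(m+1)$-connected (Lemma \ref{n+1connected}), writing $\deg(v)=m+t$ gives $m+1\le m+t\le 2m-2$, and because $M'$ matches $V(M')$ perfectly, at least $2(m+t)-(2m-2)=2t+2$ neighbours of $v$ are matched by $M'$ inside $N(v)$, yielding a matching of size $t+1$ in $G[N(v)]$, contradicting Lemma \ref{G[N(v)]}. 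This is exactly the style of argument the paper itself uses (e.g.\ in its Lemma \ref{minimum degree}), so the proof is very much in the paper's spirit even though the paper does not supply one.

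Two small points you should make explicit. First, your application of $E(m,n)$ to $(M'\cup\{e\},N')$ is legitimate only under the reading that ``disjoint matchings'' means disjoint as edge sets, $M\cap N=\varnothing$ (which is indeed the paper's definition); under a vertex-disjoint reading you would additionally need $e$ to avoid $V(N')$, and your pigeonhole count would have to be redone over $V(M')\cup V(N')$. Second, the interval $1\le t\le m-2$ is empty when $m\le 2$; there the argument terminates even earlier, since $\deg(v)\le 2m-2<m+1$ already contradicts the minimum-degree bound, which is what your closing remark about boundary cases amounts to and deserves one explicit sentence.
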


\begin{Lemma}\label{E(m,0)--E(m-1,1)}(\cite{M. Porteous96})
If a graph $G$ is $E(m,0)$ for $m\geq1$, then it is $E(m-1,1)$.
\end{Lemma}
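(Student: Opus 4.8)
The plan is to treat the single forbidden edge of $E(m-1,1)$ directly. Let $M$ be a matching of size $m-1$ and let $e=x_1x_2$ be the edge to be avoided (so the matching $N$ of size $1$ is $\{e\}$), with $M$ and $N$ disjoint; I seek a perfect matching $F$ with $M\subseteq F$ and $e\notin F$. First I would dispose of the degenerate case in which $e$ meets $V(M)$: if, say, $x_1$ is covered by an edge $f=x_1u\in M$, then $u\neq x_2$ (otherwise $f=e\in M$, contradiction), so in every perfect matching containing $M$ the vertex $x_1$ is matched to $u\neq x_2$ and hence $e\notin F$ automatically; and such an $F$ exists because $G$ is $(m-1)$-extendable by Lemma~\ref{E(m,n)--E(m-1,n)} (with $n=0$). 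Thus I may assume $e$ is vertex-disjoint from $M$.

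Next comes the main construction, a rerouting argument driven by $m$-extendability. If $x_1$ has a neighbour $w\notin V(M)\cup\{x_2\}$, then $M\cup\{x_1w\}$ is a matching of size $m$; since $G$ is $E(m,0)$ it extends to a perfect matching $F\supseteq M\cup\{x_1w\}$, in which $x_1$ is matched to $w\neq x_2$, so $e\notin F$ and we are done. The same works if $x_2$ has a neighbour outside $V(M)\cup\{x_1\}$. Consequently the only configuration left to rule out is the \emph{remaining case}, where $N(x_1)\subseteq V(M)\cup\{x_2\}$ and $N(x_2)\subseteq V(M)\cup\{x_1\}$; here $x_1$ can only be matched within $V(M)$ or to $x_2$, and since $M$ already saturates $V(M)$, any perfect matching containing $M$ is forced to use $e$. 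So the whole proof hinges on showing this remaining case is incompatible with $m$-extendability, and I expect this to be the main obstacle.

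The key idea for the obstacle is to feed the forced local structure into Lemma~\ref{G[N(v)]}. Put $A=N(x_1)\cap V(M)$, so that $N(x_1)=A\cup\{x_2\}$; by Lemma~\ref{n+1connected} we have $\deg(x_1)\geq m+1$, so writing $\deg(x_1)=m+t$ gives $t\geq 1$ and $|A|=m+t-1$. Now $M$ is a perfect matching of the $2(m-1)$-element set $V(M)$ and $A\subseteq V(M)$; since at most $|V(M)\setminus A|=m-1-t$ edges of $M$ can meet $V(M)\setminus A$, at least $(m-1)-(m-1-t)=t$ edges of $M$ lie entirely inside $A$. These $t$ edges form a matching of size $t$ inside $G[A]\subseteq G[N(x_1)]$, contradicting Lemma~\ref{G[N(v)]}, which forbids a matching of size $t$ in $G[N(x_1)]$ for the degree-$(m+t)$ vertex $x_1$. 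Hence the remaining case cannot occur and the rerouting argument always applies, proving $G$ is $E(m-1,1)$. I would remark that the base case $m=1$ is absorbed into this scheme, since there $M=\varnothing$ and the $2$-connectivity from Lemma~\ref{n+1connected} already forces $\deg(x_1)\geq 2$, giving a neighbour of $x_1$ other than $x_2$.
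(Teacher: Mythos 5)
Your proof is correct. There is, however, nothing in the paper to compare it against: the paper quotes this lemma from Porteous and Aldred \cite{M. Porteous96} without reproducing a proof, so the meaningful check is whether your argument is self-contained given what the paper does state --- and it is. All three ingredients you invoke are available: Lemma \ref{E(m,n)--E(m-1,n)} with $n=0$ handles the degenerate case where $e=x_1x_2$ shares a vertex with $V(M)$ (there $x_1$ is forced onto its $M$-partner $u\neq x_2$, so any extension of $M$ avoids $e$); Lemma \ref{n+1connected} gives $\deg(x_1)\geq m+1$, i.e.\ $t\geq 1$; and Dean's Lemma \ref{G[N(v)]} kills the residual case $N(x_1)\subseteq V(M)\cup\{x_2\}$. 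Your counting in that last step checks out: $|A|=m+t-1$ and $|V(M)|=2m-2$, so at most $m-t-1$ edges of $M$ meet $V(M)\setminus A$, leaving at least $t$ edges of $M$ inside $A\subseteq N(x_1)$, a matching of size $t$ in $G[N(x_1)]$ forbidden by Lemma \ref{G[N(v)]} for a degree-$(m+t)$ vertex in an $m$-extendable graph (note $A\subseteq V(M)$ automatically forces $t\leq m-1$, so the count is never vacuous). Two small remarks: the symmetric hypothesis on $x_2$ in your ``remaining case'' is never used --- the contradiction comes from $x_1$ alone, so the trichotomy could be collapsed to two cases; and it is pleasant that your argument runs the paper's machinery in reverse --- the paper repeatedly builds matchings $M'$ in $G[V,R]$ around a constrained vertex to show graphs are \emph{not} $E(m,1)$, whereas you use extendability positively, rerouting $x_1$ to a neighbour outside $V(M)\cup\{x_2\}$ and letting $E(m,0)$ finish the job.
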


The converse of  Lemma \ref{E(m,0)--E(m-1,1)} does not hold. For
example, the join graph $\overline{K_{2}}+K_{2m}$, obtained by
joining each of two vertices  to each vertex of the complete graph
$K_{2m}$ with edges, has property $E(m-1,1)$, but is not
$m$-extendable.

 A {\em surface} is a connected compact Hausdorff space which is
locally homeomorphic to an open disc in the plane. If a surface
$\Sigma$ is obtained from the sphere by adding some number $g\geq0$
of handles (resp. some number $\bar{g}>0$ of crosscaps), then it is
said to be {\em orientable} of genus $g=g(\Sigma)$ (resp. {\em
non-orientable} of genus $\bar{g}=\bar{g}(\Sigma)$). We shall follow
the usual notation of the surface of orientable genus $g$ (resp.
non-orientable genus $\bar{g}$) by $S_{g}$ (resp. $N_{\bar{g}}$).

Let $\mu(\Sigma)$ be the smallest integer $k$ such that no graphs
embedded on the surface $\Sigma$ are $k$-extendable. Dean
\cite{Dean92} presented an elegant formula  that
\begin{equation}\label{Dean}\mu(\Sigma)=2+\lfloor \sqrt{4-2\chi(\Sigma)}\rfloor,\end{equation}
where $\chi(\Sigma)$ is the Euler characteristic of a surface
$\Sigma$, i.e. $\chi(\Sigma)=2-2g$ if $\Sigma$ is an orientable
surface of genus $g$ and $\chi(\Sigma)=2-\bar{g}$ if $\Sigma$ is a
non-orientable surface of genus $\bar g$. For the surfaces $\Sigma$
with small genus such as the sphere, the projective plane, the torus
and the Klein bottle, the following results show that no graphs
embedded on $\Sigma$ are $E(\mu(\Sigma)-1,1)$.

\begin{Lemma}\label{1,2,3,4}
(i)(\cite{Aldred01}) No planar graph is $E(2,1)$;\\
(ii) (\cite{R.E.L. Aldred08})No projective planar graph is $E(2,1)$;\\
(iii) (\cite{R.E.L. Aldred08})If $G$ is  toroidal, then $G$ is not $E(3,1)$;\\
(iv) (\cite{R.E.L. Aldred08})If $G$ is embedded on the Klein bottle,
then $G$ is not $E(3,1)$.
\end{Lemma}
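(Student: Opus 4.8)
The plan is to handle all four surfaces by one uniform scheme and to isolate exactly the extra leverage that property $E(m,1)$ supplies over plain $m$-extendability. Write $m=\mu(\Sigma)-1$, so that $m=2$ for the sphere and the projective plane and $m=3$ for the torus and the Klein bottle (for the sphere one takes the known value $\mu=3$, the single surface for which formula~\eqref{Dean} must be read with its standard exception). Assume for contradiction that some graph $G$ embedded on $\Sigma$ has property $E(m,1)$. By Lemma~\ref{E(m,n)--E(m,0)} the graph $G$ is then $m$-extendable, so Lemma~\ref{n+1connected} makes it $(m+1)$-connected and gives $\delta(G)\ge m+1$. On the other side, Euler's formula bounds a simple embedded graph by $|E(G)|\le 3|V(G)|-3\chi(\Sigma)$, whence $\delta(G)\le 6-6\chi(\Sigma)/|V(G)|$; this yields $\delta(G)\le 5$ for the sphere and projective plane and $\delta(G)\le 6$ for the torus and Klein bottle.

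The key new ingredient is a local strengthening of Dean's Lemma~\ref{G[N(v)]}: if $G$ is $E(m,1)$ and $v$ has degree $m+t$ with $t\le m+1$, then $G[N(v)]$ contains no matching of size $t-1$ (equivalently, the matching number of $G[N(v)]$ is at most $\deg(v)-m-2$). I would prove this by imitating the construction behind Lemma~\ref{G[N(v)]}: starting from a matching $M_{0}$ of size $t-1$ inside $N(v)$, I extend it by $m-t+1$ further independent edges to build a matching $M$ of size $m$ that saturates every neighbour of $v$ except one vertex $z$ and misses $v$. Since $G$ is $m$-extendable, $G-V(M)$ has a perfect matching; but in $G-V(M)$ the vertex $v$ is adjacent only to $z$, so every perfect matching of $G-V(M)$ uses the edge $vz$. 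Setting $N=\{vz\}$ then contradicts $E(m,1)$. The only delicate point in this step is securing the $m-t+1$ augmenting edges, which I would extract from the high connectivity of $G$ and the assumption that $G$ has many vertices, disposing of the finitely many small graphs by hand.

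Applied at a minimum-degree vertex this already sharpens the bound to $\delta(G)\ge m+2$: a vertex of degree $m+1$ would need $G[N(v)]$ to avoid a matching of size $0$, which is absurd. Hence $\delta\in\{4,5\}$ for the sphere and projective plane and $\delta\in\{5,6\}$ for the torus and Klein bottle, and the same lemma describes the neighbourhoods at these degrees: when $\deg v=m+2$ the set $N(v)$ is independent, and when $\deg v=m+3$ the graph $G[N(v)]$ has matching number at most $1$, hence lies inside a star. Translating through the rotation system, an independent neighbourhood forces every face at $v$ to have length at least $4$, while a star neighbourhood leaves at most two triangular faces at $v$. The contradiction should then be produced by a discharging argument driven by Euler's formula on $\Sigma$, namely $\sum_{v}(6-\deg v)\ge 6\chi(\Sigma)$, whose right side equals $12,6,0,0$ on the four surfaces: the large faces forced around the low-degree vertices drain more charge than the surface can afford.

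I expect this last step to be the real obstacle. The clean version of the counting, where \emph{every} neighbourhood is independent, so that all faces have length at least $4$ and hence $|E(G)|\le 2|V(G)|-2\chi(\Sigma)$, which already contradicts $\delta(G)\ge m+2$, fails in general because the local lemma constrains only the low-degree vertices, while high-degree vertices remain free to lie on many triangles. Balancing the forced sparsity around the degree-$(m+2)$ and degree-$(m+3)$ vertices against this freedom is exactly what requires a carefully tuned discharging rule, or equivalently an explicit case analysis of the cyclic arrangement of neighbours around a minimum-degree vertex in the spirit of Aldred and Plummer. This is also where the four surfaces genuinely part company: the torus and the Klein bottle, for which $\chi=0$ leaves no positive charge to begin with and $\delta$ may reach $6$, are the most delicate and will need the finest case distinctions.
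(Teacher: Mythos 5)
The paper itself never proves Lemma~\ref{1,2,3,4}; it quotes parts (i)--(iv) from Aldred and Plummer and uses them as the base case of Theorem~\ref{general}, so your attempt has to be judged against what a correct proof requires rather than against a proof in this paper. Your skeleton is sound, and your central claim --- if $G$ is $E(m,1)$ and $\deg(v)=m+t$ with $t\le m+1$, then $G[N(v)]$ has no matching of size $t-1$ --- is in fact true; it is precisely the abstraction of the device the paper uses in Lemma~\ref{minimum degree} and throughout Sections~2 and~3. But your proposed proof of that claim breaks exactly at the case you need. You want to grow a matching $M_{0}$ of size $t-1$ inside $N(v)$ by $m-t+1$ further independent edges leaving $N[v]$, ``extracted from the high connectivity of $G$''. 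For $t=3$ (degree $m+3$, unavoidable since $\delta$ may equal $m+3$ on the torus and the Klein bottle), the counting behind any such extraction fails: a neighbour of $v$ not covered by $M_{0}$ may have as few as $\delta-1-|V(M_{0})|=(m+2)-1-4=m-3$ neighbours outside $N[v]$, while $m-2$ such neighbours must be matched out simultaneously, so Hall's condition is not forced by degrees alone. The fall-back you offer --- assume $|V(G)|$ is large and dispose of ``the finitely many small graphs by hand'' --- is not legitimate, because the statement must hold for every graph with at least $2m+4$ vertices. The gap is repairable, but by a different idea: $G$ is $m$-extendable (Lemma~\ref{E(m,n)--E(m,0)}), hence $(t-1)$-extendable, so $M_{0}$ extends to a perfect matching $F_{0}$; let $w$ be the $F_{0}$-partner of $v$ (so $w\in N(v)$). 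The edges of $F_{0}$ covering $N(v)\setminus\{w\}$ number at most $(t-1)+(m-t+1)=m$ and do not include $vw$; pad them with other edges of $F_{0}\setminus\{vw\}$ to a matching $M$ of size exactly $m$, and set $N=\{vw\}$. Any perfect matching containing $M$ must match $v$ to $w$, contradicting $E(m,1)$. No connectivity or size assumption beyond the definition is needed.

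The second, and decisive, defect is that your argument simply stops before the end: you declare the Euler-formula step ``the real obstacle'' and describe what a discharging rule would have to do without producing one, so what you have is a plan, not a proof. Ironically, once your structural facts are in hand this step is immediate and needs no discharging at all: use the pointwise contribution $\phi(v)$ of Eq.~(\ref{control}) and Lemma~\ref{2-cell}. (Also note that matching number at most $1$ means star \emph{or} triangle, not only star, but either way a vertex of degree at least $5$ has at most two triangular face angles.) With $\delta\ge m+2$, independent neighbourhoods at degree $m+2$, and at most two triangular face angles at degree $m+3$, one gets on the sphere and projective plane ($m=2$): $\phi(v)\le 1-2+\frac{4}{4}=0$ at degree $4$, $\phi(v)\le 1-\frac{5}{2}+\frac{2}{3}+\frac{3}{4}=-\frac{1}{12}$ at degree $5$, and $\phi(v)\le 1-\frac{d}{6}\le 0$ at degree $d\ge 6$, whence $\sum_{v}\phi(v)\le 0<\chi(\Sigma)$, a contradiction; and on the torus and Klein bottle ($m=3$): $\phi(v)\le 1-\frac{5}{2}+\frac{5}{4}=-\frac{1}{4}$ at degree $5$, $\phi(v)\le 1-3+\frac{2}{3}+1=-\frac{1}{3}$ at degree $6$, and $\phi(v)\le 1-\frac{d}{6}\le-\frac{1}{6}$ at degree $d\ge 7$, whence $\sum_{v}\phi(v)<0=\chi(\Sigma)$, again a contradiction. (For (iii) and (iv) one must also reduce to a 2-cell embedding of Euler characteristic at least $0$ via Lemmas~\ref{J.W.T.630}, \ref{Parson870} and monotonicity, Lemma~\ref{E(m,n)--E(m-1,n)}, a point you omit.) In short: right strategy and a true key lemma, but one supporting argument that fails as stated and a final step that is missing rather than merely sketched.
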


In this paper we obtain the following general result, which will be
proved in next section.

\begin{thm}\label{general}
For any surface $\Sigma$, no graphs embedded on $\Sigma$ are
$E(\mu(\Sigma)-1,1)$.
\end{thm}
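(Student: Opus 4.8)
The plan is to argue by contradiction. Write $k=\mu(\Sigma)$ and suppose some graph $G$ embedded on $\Sigma$ has property $E(k-1,1)$. First I would extract the structural consequences already available: by Lemma~\ref{E(m,n)--E(m,0)} (with $m=k-1$, $n=1$) the graph $G$ is $E(k-1,0)$, i.e.\ $(k-1)$-extendable, so Lemma~\ref{n+1connected} makes $G$ $k$-connected and forces $\delta(G)\geq k$; moreover $G$ has at least $2k+2$ vertices. The goal is then to show that $G$ must satisfy exactly the degree and neighbourhood conditions that the counting behind Dean's formula~\eqref{Dean} shows to be impossible on $\Sigma$ for a hypothetical $k$-extendable graph, thereby contradicting the definition of $\mu(\Sigma)$.

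The new ingredient is a strengthened version of Lemma~\ref{G[N(v)]} tailored to $E(k-1,1)$: if $v$ has degree $(k-1)+t$ in an $E(k-1,1)$ graph $G$, then $G[N(v)]$ contains no matching of size $t-1$. To prove it, suppose $G[N(v)]$ had a matching $\{x_1y_1,\dots,x_{t-1}y_{t-1}\}$ of size $t-1$. Keeping one leftover neighbour $z$ aside, I would match the remaining $k-t$ neighbours of $v$ to distinct vertices outside $N[v]$, producing together with the $x_iy_i$ a matching $M$ of size exactly $(t-1)+(k-t)=k-1$ that saturates every neighbour of $v$ except $z$ and misses $v$. Since $G$ is $(k-1)$-extendable, $M$ extends to a perfect matching, and every such extension is forced to use the edge $vz$ (all other neighbours of $v$ are already covered). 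Hence the pair $(M,\{vz\})$ violates $E(k-1,1)$, a contradiction. This yields two conclusions. Taking $t=1$ would require $G[N(v)]$ to have no matching of size $0$, which is absurd, so $G$ has no vertex of degree $k$ and therefore $\delta(G)\geq k+1$; and for every vertex $v$ one gets $\nu\bigl(G[N(v)]\bigr)\leq \deg(v)-k-1$.

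These are precisely the two conditions — minimum degree at least $k+1$ together with $\nu(G[N(v)])\leq\deg(v)-k-1$ for all $v$ — that Dean's argument derives (via Lemma~\ref{G[N(v)]} with $m=k$) for any $k$-extendable graph and then shows to be unrealisable on $\Sigma$. Thus I would re-run that Euler-characteristic count: using $\delta(G)\geq k+1$ and $|V|-|E|+|F|=\chi(\Sigma)$ with $2|E|\geq 3|F|$, locate a vertex $v$ of small degree, and analyse the faces and rotation at $v$ to show the neighbours of $v$ induce a matching of size at least $\deg(v)-k$. This contradicts the bound $\nu(G[N(v)])\leq\deg(v)-k-1$ obtained above, and since $k=\mu(\Sigma)$ is exactly the threshold at which Dean's incompatibility kicks in, no $E(\mu(\Sigma)-1,1)$ graph can embed on $\Sigma$.

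The main obstacle will be the construction of the witness matching $M$ in the strengthened lemma: routing the leftover neighbours $z_2,\dots$ to \emph{distinct} vertices of $V\setminus N[v]$ amounts to a system of distinct representatives, i.e.\ a Hall-type condition, which I expect to secure from the $k$-connectivity of $G$ together with the vertex bound $|V|\geq 2k+2$. To guarantee that $V\setminus N[v]$ is large enough I would apply the lemma at a low-degree vertex supplied by Euler's formula rather than at an arbitrary one; the delicate point is to ensure that the same low-degree vertex both supports this matching construction and triggers the forced neighbourhood matching in Dean's embedding count, so that the two halves of the argument meet at a single vertex.
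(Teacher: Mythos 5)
Your entire reduction rests on the strengthened lemma (if $\deg(v)=(k-1)+t$ in an $E(k-1,1)$ graph, then $G[N(v)]$ has no matching of size $t-1$), applied at \emph{every} vertex to conclude $\delta(G)\geq k+1$ and $\nu\bigl(G[N(v)]\bigr)\leq\deg(v)-k-1$. That lemma is false, and the paper itself exhibits the counterexample: $G=\overline{K_{2}}+K_{2m}$ has property $E(m-1,1)$, yet at either vertex $a$ of the $\overline{K_{2}}$ we have $\deg(a)=2m=(m-1)+(m+1)$, i.e.\ $t=k+1$ with $k=m$, while $G[N(a)]=K_{2m}$ contains a matching of size $m=t-1$. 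Your proof sketch breaks exactly where it must: when $t=k+1$ the hypothesised matching of size $t-1$ covers all $2k$ neighbours of $v$, so no leftover neighbour $z$ exists and the pair $(M,\{vz\})$ cannot be formed; and already for intermediate $t$ (roughly $t\geq k/2+1$) the Hall condition you deferred can genuinely fail, since an unmatched neighbour of $v$ may have all of its at least $k+1$ neighbours inside $N[v]$ (adjacent to both ends of many matched pairs), and neither $k$-connectivity nor $|V(G)|\geq 2k+2$ rules this out. The failure is not incidental: the inequality $\nu\bigl(G[N(v)]\bigr)\leq\deg(v)-k-1$ is precisely the property of $k$-extendable graphs (Lemma \ref{G[N(v)]} with $m=k$) that $E(k-1,1)$ graphs need \emph{not} share --- if they did, Lemma \ref{E(m,0)--E(m-1,1)} would essentially have a converse, which the same example refutes. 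Hence the plan of deriving Dean's two hypotheses from $E(k-1,1)$ and then citing Dean's count wholesale cannot be carried out.

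What survives of your proposal is the $t=1$ case (which is exactly the paper's Lemma \ref{minimum degree}, giving $\delta(G)\geq k+1$) and your closing instinct that the matching construction should only ever be attempted at a low-degree vertex supplied by Euler's formula. The paper's proof is the rigorous implementation of that instinct: reduce to a minimal 2-cell embedding (Lemmas \ref{J.W.T.630} and \ref{Parson870}), take a control point $v$ furnished by Lemma \ref{2-cell}, where the Euler contribution forces $\deg(v)\leq x/3+c$ with $x$ the number of triangular faces at $v$ and $c$ as in (\ref{c}), and then run a finite case analysis over the pairs $(x,\deg(v))$. In each case the triangular faces supply edges of $G[N(v)]$, and Dean's Lemma \ref{G[N(v)]} (available via Lemma \ref{E(m,n)--E(m,0)}) forces every other edge of $G[N(v)]$ to meet the few triangle vertices; it is this forced near-independence of $N(v)$ --- not connectivity --- that guarantees the matching into $R=V(G)\setminus N[v]$ and produces the forbidden-edge contradiction, after which the arithmetic fact $\lfloor c\rfloor\leq\mu(\Sigma)$ (Claim 3) closes the argument. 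To repair your write-up you would have to abandon the universal lemma and localise all constructions at the control point, at which stage you will find yourself reproducing the paper's Claims 1--3 rather than shortcutting them.
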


Furthermore, we obtain that if a graph $G$ embedded on a surface has
enough many vertices, then $G$ has no property $E(k-1,1)$ for each
integer $k\geq 4$. Precisely, we have the following result; its
proof  will be given in Section 3.

\begin{thm}\label{E(k-1,1)}
Let $G$ be a graph with genus $g$ (resp. non-orientable genus
$\bar{g}$). Then if $|V(G)|\geq \lfloor \frac{8g-8}{k-3}\rfloor+1$
(resp. $|V(G)|\geq \lfloor \frac{4\bar{g}-8}{k-3}\rfloor+1$), $G$ is
not $E(k-1,1)$ for each integer $k\geq4$.
\end{thm}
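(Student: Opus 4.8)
The plan is to argue by contraposition: assuming $G$ is $E(k-1,1)$, I will bound $|V(G)|$ above by $\frac{8g-8}{k-3}$ in the orientable case (resp.\ $\frac{4\bar g-8}{k-3}$), contradicting the size hypothesis. First, if $G$ fails to be $(k-1)$-extendable, then by the contrapositive of Lemma \ref{E(m,n)--E(m,0)} it is already not $E(k-1,1)$ and there is nothing to prove; so I may assume $G$ is $(k-1)$-extendable. Lemmas \ref{n+1connected} and \ref{E(m,n)--E(m,0)} then show $G$ is $k$-connected, whence $\delta(G)\ge k$, and Lemma \ref{G[N(v)]} gives, for every vertex $v$, that $G[N(v)]$ contains no matching of size $d(v)-k+1$.

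The heart of the argument is to use the single \emph{forbidden} edge allowed by $E(k-1,1)$ to sharpen this local information through a ``trapping'' construction. Fix a vertex $v$ and a neighbour $w$, and suppose $G[N(v)\setminus\{w\}]$ has a matching $S$ of size $d(v)-k$. Combining $S$ with edges joining the remaining $2k-1-d(v)$ neighbours of $v$ to distinct vertices outside $N(v)\cup\{v\}$ produces a matching $M$ of size exactly $k-1$ that covers every neighbour of $v$ except $w$ and misses $v$. Taking $N=\{vw\}$, any perfect matching containing $M$ and avoiding $N$ must match $v$ to an uncovered neighbour; but the only such neighbour is $w$ and $vw$ is forbidden, contradicting $E(k-1,1)$. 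Since a matching of size $d(v)-k$ automatically misses one of the $2k-d(v)\ge 1$ uncovered neighbours whenever $d(v)\le 2k-1$, it follows that every vertex with $d(v)\le 2k-1$ has neighbourhood matching number at most $d(v)-k-1$. As this quantity is negative when $d(v)=k$, \emph{no} vertex has degree exactly $k$, so $\delta(G)\ge k+1$. I expect the delicate point here to be producing the ``outside'' edges: the $2k-1-d(v)$ leftover neighbours must be matched to distinct external vertices, and verifying this system of distinct representatives is where the size hypothesis and the $k$-connectivity of $G$ enter through a Hall-type estimate.

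With $\delta(G)\ge k+1$ and the neighbourhood sparsity just established, the final step is a counting argument on the embedding. Writing $t_v$ for the number of triangular faces incident to $v$ and using Euler's formula in the form $\sum_{v}(d(v)-4)+\sum_{\text{faces }f}(|f|-4)=-4\chi(\Sigma)$, the minimum-degree bound gives $\sum_v(d(v)-4)\ge(k-3)|V(G)|$, while each triangular face contributes $-1$ to the second sum. A discharging scheme that sends charge from each vertex to its incident triangular faces should raise every face charge to at least $0$ while keeping each final vertex charge at least $k-3$: for vertices of degree at least $\tfrac{3(k+1)}{2}$ the trivial estimate $t_v\le d(v)$ already suffices, and for the remaining vertices (which satisfy $d(v)\le 2k-1$) the sharp matching-number bound caps $t_v$, since triangular faces at $v$ come only from rotation-consecutive neighbours that are adjacent. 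This yields $(k-3)|V(G)|\le -4\chi(\Sigma)$, which is exactly $8g-8$ on $S_g$ and $4\bar g-8$ on $N_{\bar g}$, completing the contradiction.

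The main obstacle I anticipate is precisely this transfer from local to global: the matching-number bounds on neighbourhoods (via an Erd\H{o}s--Gallai-type estimate together with the rotation structure of the embedding) must be converted into a per-vertex cap on incident triangular faces tight enough that the surplus degree at each vertex covers those faces and the triangular deficits never overwhelm the $(k-3)|V(G)|$ term. A secondary technical hurdle, flagged above, is making the system of distinct representatives in the trapping construction rigorous for all graphs meeting the size hypothesis; I would isolate both points as separate lemmas before assembling the bound. Finally, I note that the orientable and non-orientable statements are the single inequality $(k-3)|V(G)|\le -4\chi(\Sigma)$, since $-4\chi=8g-8$ on $S_g$ and $-4\chi=4\bar g-8$ on $N_{\bar g}$.
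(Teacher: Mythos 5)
Your overall architecture coincides with the paper's: assume $E(k-1,1)$, deduce $(k-1)$-extendability and $\delta(G)\ge k+1$, cap the triangular faces at each vertex by combining Lemma~\ref{G[N(v)]} with a ``trapping'' construction that exploits the single forbidden edge, and sum over vertices; your identity $\sum_v(d(v)-4)+\sum_f(|f|-4)=-4\chi(\Sigma)$ with charge $1/3$ sent to each incident triangle is just a reformulation of the paper's per-vertex Euler-contribution bound $\phi(v)\le-\frac{k-3}{4}$ via Lemma~\ref{2-cell}, and your endgame arithmetic is correct. The genuine gap is the lemma everything leans on: that every vertex with $d(v)\le 2k-1$ has neighbourhood matching number at most $d(v)-k-1$. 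Your proof of it --- extend a matching $S$ of size $d(v)-k$ in $G[N(v)\setminus\{w\}]$ by a system of distinct representatives sending the leftover $2k-1-d(v)$ neighbours into $R=V(G)\setminus N[v]$ --- breaks down because that SDR need not exist. Concretely, take $k=4$, $d(v)=6$, $N(v)=\{a,b,c,d,u_0,w\}$, with edges $ab$, $cd$, and $u_0$ joined exactly to $v,a,b,c,d$. Then $\deg(u_0)=5=k+1$, the matching number of $G[N(v)]$ is $2=d(v)-k$, so Lemma~\ref{G[N(v)]} and $3$-extendability are not violated; $S=\{ab,cd\}$ avoids $w$, yet $u_0$ has no neighbour in $R$ at all, and no Hall-type estimate can produce your matching $M$. (One can still trap $v$ here by leaving $u_0$ uncovered instead and matching $w$ into $R$; but proving that some such choice always works for every configuration with $k+2\le d(v)\le 2k-1$ is precisely the missing content. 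It is also not supplied by ``$k$-connectivity plus the size hypothesis'': the size hypothesis never interacts with the Hall step, and connectivity gives only $\delta\ge k$; what makes such arguments go through is sparsity of $G[N(v)]$ forced by the no-large-matching condition.)

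The paper avoids your general lemma entirely, and this is the substantive difference. It applies the trapping construction only to matchings made of \emph{triangular-face} edges; since these join rotation-consecutive neighbours, they form paths around $v$, and Lemma~\ref{G[N(v)]} then forces every leftover neighbour to have at most $2$ or $3$ \emph{specified} neighbours inside $N(v)$ (for instance only $v_2$ and $v_{t+1}$ in its Subcase 2.2), hence at least $k-3$ neighbours in $R$; the bipartite matching into $R$ then exists by a simple minimum-degree argument, with no delicate Hall analysis. Correspondingly, the paper proves caps weaker than your matching-number bound: for $d(v)=k+2$ it shows only that at most $3$ (not $2(d(v)-k-1)=2$) triangular faces occur at $v$, and for $d(v)=k+m$ with $m\ge3$ at most $2m$ of them, apart from one exceptional all-triangular case killed by a direct trap. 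These weaker caps are exactly what your discharging needs, since $t_v\le 3(d(v)-k-1)$ suffices; so your scheme can be repaired by substituting the paper's face-structured case analysis (Cases 1--3 of its Section~3) for your unproved lemma. Two smaller points: you never justify that the embedding may be taken 2-cell, which your Euler identity requires (the paper invokes Lemmas~\ref{J.W.T.630} and~\ref{Parson870}), and your $\delta(G)\ge k+1$ step is exactly the paper's Lemma~\ref{minimum degree}, which you could cite rather than reprove.
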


Combining Theorem \ref{E(k-1,1)} with  Lemma \ref{E(m,0)--E(m-1,1)},
we have an immediate consequence as follows.

\begin{cor}(\cite{Zhang08})
Let $G$ be any connected graph of genus $g$ (resp. non-orientable
genus \={g}). Then if $|V(G)|\geq \lfloor\frac{8g-8}{k-3}\rfloor+1$
(resp. $\lfloor\frac{4\bar{g}-8}{k-3}\rfloor+1$) for any integer
$k\geq4$, $G$ is not $k$-extendable.
\end{cor}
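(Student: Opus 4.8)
The plan is to prove the contrapositive of Theorem \ref{E(k-1,1)}: assuming $G$ is $E(k-1,1)$ and embedded in a surface of Euler characteristic $\chi$, I will bound $|V(G)|$ from above by $\lfloor (8g-8)/(k-3)\rfloor$ (resp. $\lfloor (4\bar g-8)/(k-3)\rfloor$). First I would record the structural consequences of the hypothesis. By Lemma \ref{E(m,n)--E(m,0)}, $E(k-1,1)$ implies $E(k-1,0)$, i.e.\ $G$ is $(k-1)$-extendable, so by Lemma \ref{n+1connected} it is $k$-connected and in particular $\delta(G)\ge k$. Moreover, applying Lemma \ref{G[N(v)]} with $m=k-1$, every vertex $v$ satisfies $\nu(G[N(v)])\le \deg(v)-k$, where $\nu$ denotes the matching number; in the extreme case $\deg(v)=k$ this says $N(v)$ is an independent set.

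The engine of the bound is Euler's formula. Since an $E(k-1,1)$ graph is simple and has many vertices, $|E(G)|\le 3|V(G)|-3\chi(\Sigma)$, so the average degree satisfies $2|E(G)|/|V(G)|\le 6-6\chi(\Sigma)/|V(G)|$. With $\chi=2-2g$ (resp.\ $\chi=2-\bar g$), a direct computation shows that the hypothesis $|V(G)|\ge\lfloor (8g-8)/(k-3)\rfloor+1$ forces $|V(G)|>(8g-8)/(k-3)$, which is exactly equivalent to the average degree being strictly less than $\tfrac32(k+1)$; the non-orientable bound $(4\bar g-8)/(k-3)$ drops out of the same inequality with $\chi=2-\bar g$. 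Hence there is a vertex $v$ with $k\le \deg(v)<\tfrac32(k+1)$, and since $\tfrac32(k+1)\le 2k$ for every $k\ge 3$ this places $\deg(v)$ in the range $[k,2k-1]$.

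The heart of the argument is to convert such a low-degree vertex into a direct violation of $E(k-1,1)$. Writing $d=\deg(v)$ and using a maximum matching of $G[N(v)]$, whose size is $d-k$ when equality holds in Lemma \ref{G[N(v)]}, I would build a matching $M$ of size exactly $k-1$ covering all but one neighbour $u_1$ of $v$: take the $d-k$ neighbourhood edges together with $2k-1-d$ additional edges joining the remaining neighbours of $v$ to distinct vertices outside $N(v)\cup\{v\}$ (here $2k-1-d\ge 0$ because $d\le 2k-1$). Then $M$ has size $(d-k)+(2k-1-d)=k-1$ and saturates exactly $d-1$ neighbours of $v$. Setting $N=\{vu_1\}$, which is disjoint from $M$, the property $E(k-1,1)$ would produce a perfect matching $F\supseteq M$ with $vu_1\notin F$; but every neighbour of $v$ except $u_1$ is already saturated by $M$ and the edge $vu_1$ is forbidden, so $v$ cannot be saturated by $F$ — a contradiction. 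This forces $|V(G)|\le (8g-8)/(k-3)$ and proves the theorem; the stated corollary is then immediate, since by Lemma \ref{E(m,0)--E(m-1,1)} a $k$-extendable graph is $E(k-1,1)$, so failing $E(k-1,1)$ rules out $k$-extendability.

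The main obstacle, and the step demanding the most care, is the construction of the last paragraph. Two points must be secured: that the chosen vertex actually attains the equality $\nu(G[N(v)])=d-k$ in Lemma \ref{G[N(v)]}, so that $M$ can be arranged to miss exactly one neighbour and a single forbidden edge suffices — this is precisely the place where the full strength of $E(k-1,1)$, rather than mere $(k-1)$-extendability, is spent; and that the required system of distinct representatives routing the leftover neighbours outside $N(v)\cup\{v\}$ exists. The latter follows from $\delta(G)\ge k$ together with the abundance of vertices supplied by the hypothesis, via Hall's theorem. The former is the delicate point, since Lemma \ref{G[N(v)]} gives only an upper bound on $\nu(G[N(v)])$; I expect it to require either selecting $v$ of minimum degree or a short auxiliary argument excluding neighbourhoods whose matching number is too small (such vertices would have even sparser neighbourhoods, hence larger incident faces, and would have to be absorbed into the Euler-formula count rather than treated by the direct construction).
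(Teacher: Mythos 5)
Your closing step---deducing the corollary from ``$G$ is not $E(k-1,1)$'' via Lemma \ref{E(m,0)--E(m-1,1)}---is exactly the paper's proof of this corollary, which simply combines Theorem \ref{E(k-1,1)} with that lemma. The problem lies in the part you chose to re-prove, namely Theorem \ref{E(k-1,1)} itself: the step you yourself flag as delicate is a genuine gap, and it cannot be closed within your framework. Your contradiction requires a vertex $v$ of degree $d\in[k,2k-1]$ whose neighbourhood attains \emph{equality} in Lemma \ref{G[N(v)]}, i.e. $\nu(G[N(v)])=d-k$; if instead $\nu(G[N(v)])=s<d-k$, then saturating all but one neighbour of $v$ costs $s+(d-1-2s)=d-1-s>k-1$ edges, and a pair $(M,N)$ with $|M|>k-1$ contradicts nothing. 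Worse, the implicit lemma your argument needs---that an $E(k-1,1)$ graph has no vertex of degree below $\tfrac32(k+1)$---is false: $K_{k+1,k+1}$ has property $E(k-1,1)$ (after deleting the $k-1$ prescribed edges, the four uncovered vertices induce a $K_{2,2}$, which has a perfect matching avoiding any single forbidden edge), yet every vertex has degree $k+1$ and an independent neighbourhood, so $\nu(G[N(v)])=0<1=d-k$ at \emph{every} vertex. Hence no selection rule (minimum degree or otherwise) rescues the construction, and your fallback of ``absorbing'' such vertices into the Euler count is not executable: your use of Euler's formula is global (it already concedes the worst case that all faces are triangles) and delivers only the single low-degree vertex you cannot exploit. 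A secondary weakness: even when $s=d-k$, your Hall step is not secured by $\delta(G)\ge k$ alone, since an unsaturated neighbour may be adjacent to many saturated ones, leaving it fewer than the required number of neighbours outside $N[v]$.

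The paper's proof of Theorem \ref{E(k-1,1)} avoids all of this by making the dichotomy local and about the \emph{embedding} rather than the abstract graph. By Lebesgue's formula (Lemma \ref{2-cell}) it suffices to prove $\phi(v)\le-\frac{k-3}{4}$ for every vertex $v$, and the case split is on the number $x$ of \emph{triangular faces} at $v$, not on $\deg(v)$ or $\nu(G[N(v)])$. When $x$ is small, the face-size estimate $\phi(v)\le 1-\frac{d}{2}+\frac{x}{3}+\frac{d-x}{4}\le\frac{3-k}{4}$ holds for every degree $d\ge k+1$ (this is how vertices with sparse neighbourhoods, like those of $K_{k+1,k+1}$, are handled---no construction is attempted there). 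When $x$ is too large, the triangles sit consecutively around $v$ in the rotation, so their edges already supply the needed matching inside $G[N(v)]$; and Lemma \ref{G[N(v)]} is then used in the direction your proposal misses: it forces each remaining neighbour to have almost all of its neighbours outside $N[v]$, which is precisely what legitimizes the Hall step and produces the violating pair $(M,N)$. Summing $\phi(v)\le-\frac{k-3}{4}$ over all vertices gives $\chi(\Sigma)\le-\frac{k-3}{4}|V(G)|$, i.e. $|V(G)|\le\frac{-4\chi(\Sigma)}{k-3}$, which is the desired bound. To repair your write-up you would have to replace the average-degree argument by this per-vertex Euler-contribution analysis; alternatively, since Theorem \ref{E(k-1,1)} is already established in the paper, the corollary needs nothing beyond your (correct) last sentence.
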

In particular, if we put $k=4$ in the corollary, we can obtain the
following result which is also a main theorem that Aldred et al.
recently obtained.

\begin{cor}(\cite{Aldred08})
Let $G$ be any connected graph of genus $g$ (resp. non-orientable
genus \={g}). Then if $|V(G)|\geq 8g-7$ (resp. $4\bar{g}-7$), $G$ is
not 4-extendable.
\end{cor}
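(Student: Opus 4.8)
The plan is to obtain this statement as the special case $k=4$ of the preceding corollary, so essentially no new work is needed beyond an arithmetic simplification. First I would recall the chain that produces that corollary: Theorem \ref{E(k-1,1)} guarantees that a graph of genus $g$ (resp.\ non-orientable genus $\bar{g}$) with enough vertices fails property $E(k-1,1)$, and the contrapositive of Lemma \ref{E(m,0)--E(m-1,1)} (applied with $m=k\geq 1$) says that a graph which is not $E(k-1,1)$ cannot be $E(k,0)$, that is, cannot be $k$-extendable. Setting $k=4$ throughout is the only remaining step.

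Concretely, for the orientable case I would substitute $k=4$ into the threshold $\lfloor \frac{8g-8}{k-3}\rfloor+1$. Since $k-3=1$, the quantity $\frac{8g-8}{k-3}=8g-8$ is already an integer, so the floor is a no-op and the bound collapses to $(8g-8)+1=8g-7$. Identically, in the non-orientable case $\lfloor \frac{4\bar{g}-8}{k-3}\rfloor+1$ becomes $(4\bar{g}-8)+1=4\bar{g}-7$. Thus every connected graph $G$ of genus $g$ with $|V(G)|\geq 8g-7$ (resp.\ non-orientable genus $\bar{g}$ with $|V(G)|\geq 4\bar{g}-7$) satisfies the hypothesis of the preceding corollary at $k=4$ and is therefore not $4$-extendable.

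There is essentially no obstacle here; the only point meriting a line of care is verifying that $k=4$ does satisfy the standing hypothesis $k\geq 4$ of the preceding corollary, and that with $k-3=1$ the floor function acts on an integer and may be dropped without changing the bound. One could equally bypass the preceding corollary and argue directly from Theorem \ref{E(k-1,1)} at $k=4$ together with the contrapositive of Lemma \ref{E(m,0)--E(m-1,1)} at $m=4$, which yields the same two thresholds $8g-7$ and $4\bar{g}-7$ and the same conclusion of non-$4$-extendability.
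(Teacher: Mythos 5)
Your proposal is correct and matches the paper exactly: the paper obtains this result by putting $k=4$ in the preceding corollary (itself the combination of Theorem \ref{E(k-1,1)} with the contrapositive of Lemma \ref{E(m,0)--E(m-1,1)}), and with $k-3=1$ the thresholds $\lfloor\frac{8g-8}{k-3}\rfloor+1$ and $\lfloor\frac{4\bar{g}-8}{k-3}\rfloor+1$ collapse to $8g-7$ and $4\bar{g}-7$ just as you computed. Nothing further is needed.
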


\section{Proof of Theorem \ref{general} }

 For a graph $G$, the {\em genus} $\gamma(G)$ (resp. {\em non-orientable
genus} $\bar{\gamma}(G)$) of it is the minimum genus (resp.
non-orientable genus) of all orientable (resp. non-orientable)
surfaces in which $G$ can be embedded. An embedding $\tilde{G}$ of a
graph $G$ on an orientable surface $S_{k}$ (resp. a non-orientable
surface $N_{k}$) is said to be {\em minimal} if $\gamma(G)=k$ (resp.
$\bar{\gamma}(G)=k$) and {\em 2-cell} if each component of
$\Sigma-\tilde{G}$ is homeomorphic to an open disc.

\begin{Lemma}(\cite{J.W.T.63})\label{J.W.T.630}
Every minimal orientable
 embedding of a graph $G$ is a $2$-cell embedding.
\end{Lemma}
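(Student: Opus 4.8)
The plan is to argue by contradiction and to lower the genus of the embedding by surgery along a curve concealed inside a non-cellular face. Assume $G$ is connected (the standing hypothesis in this paper), and suppose $\tilde{G}$ is a minimal orientable embedding of $G$ on $S_{k}$ with $k=\gamma(G)$ that is \emph{not} a $2$-cell embedding. Then some face $f$ of $\tilde{G}$ fails to be homeomorphic to an open disc. Since $f$ is a connected open subsurface of $S_{k}$ that is not a disc, it is not simply connected, so it contains a simple closed curve $\gamma$ that is non-contractible \emph{within} $f$; crucially $\gamma\cap G=\varnothing$, because $\gamma$ lies in the interior of a face. The goal is to exploit $\gamma$ to re-embed $G$ on a surface of genus strictly smaller than $k$, contradicting minimality.

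First I would classify $\gamma$ with respect to the whole surface $S_{k}$. If $\gamma$ is non-separating, I would cut $S_{k}$ along $\gamma$ and cap the two resulting boundary circles with discs; this yields a closed orientable surface of genus $k-1$, and since the surgery takes place entirely inside the face $f$ (away from $G$), the drawing $\tilde{G}$ survives intact, giving an embedding of $G$ in $S_{k-1}$. This already contradicts $k=\gamma(G)$.

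If $\gamma$ is separating, I would split $S_{k}$ along $\gamma$ into two pieces and cap each boundary circle with a disc, obtaining closed surfaces of genera $k'$ and $k''$ with $k'+k''=k$. Because $G$ is connected and disjoint from $\gamma$, it lies entirely on one side, say the genus-$k'$ side, so $G$ embeds in a surface of genus $k'$; it then remains to force $k'<k$, i.e. $k''\geq 1$. Here I split into two situations. If $\gamma$ is non-contractible in $S_{k}$ as well, then, being separating, it bounds no disc, so each side has positive genus and $k'<k$ follows at once. If instead $\gamma$ bounds a disc $D$ in $S_{k}$, then $D\not\subseteq f$ (otherwise $\gamma$ would be contractible in $f$), so $D$ must meet $G$; connectivity then forces all of $G$ into $D$ (the alternative would leave $\mathrm{int}(D)$ free of $G$, hence $D\subseteq f$, making $\gamma$ contractible in $f$), whence $G$ embeds in the sphere $S_{0}$ and $k=0$. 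But on the sphere every embedding of a connected graph is already $2$-cell, contradicting the existence of $f$. In every case the minimality of $k$ is violated, which completes the argument.

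The main obstacle is the separating case: one must track the genus carefully under the cut-and-cap operation, using the additivity $k'+k''=k$ together with the fact that an essential separating curve bounds positive genus on both sides. The subtle point to get right is that non-contractibility of $\gamma$ \emph{in the face} does not by itself imply non-contractibility \emph{in the surface}; the face-essential but surface-contractible scenario must be dispatched by the connectivity argument above rather than by surgery.
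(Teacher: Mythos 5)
The paper gives no proof of this lemma at all: it is quoted directly from Youngs \cite{J.W.T.63}, so the only comparison available is with the cited literature rather than with an internal argument. Your surgery proof is correct, and it is in essence the standard proof of Youngs' theorem. The key steps are all sound: a face that is not an open disc is a non-simply-connected open subsurface (this uses the classification of simply connected surfaces, together with the fact that $f\neq S_k$ since $G\neq\varnothing$), so it carries a face-essential simple closed curve $\gamma$ disjoint from $G$; cutting along a non-separating $\gamma$ and capping drops the genus by one while leaving the embedding of $G$ intact; for a separating $\gamma$ that is essential in $S_k$, the additivity $k'+k''=k$ plus the fact that an essential separating curve has positive genus on both sides again drops the genus; and in the residual case, where $\gamma$ bounds a disc $D$, your observation that a $G$-free $D$ would be a connected subset of $S_k\setminus G$ meeting $f$, hence contained in $f$, killing the essentiality of $\gamma$, correctly forces $G\subseteq \mathrm{int}(D)$ and therefore $k=0$. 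The one place you lean on an outside result is the finish: ``every embedding of a connected graph in the sphere is 2-cell'' is exactly the $k=0$ instance of the lemma being proved; it is a classical fact (Jordan--Schoenflies), so citing it is legitimate, but you can close this case with your own machinery and no new input: when $k=0$ the complementary side $S_0\setminus\mathrm{int}(D)$ is itself a disc, it is disjoint from $G$, hence it too lies in $f$, and then $\gamma$ contracts inside $f$ --- a direct contradiction. With that small patch your argument is self-contained modulo standard surface topology, which is precisely the level at which Youngs' original proof operates; what your route buys over the paper's bare citation is a transparent, elementary reduction that also makes clear exactly where connectivity of $G$ and orientability are used.
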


\begin{Lemma}(\cite{Parson87})\label{Parson870}
Every graph $G$ has a
 minimal non-orientable embedding which is 2-cell.
\end{Lemma}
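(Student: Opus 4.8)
The plan is to start from an arbitrary minimal non-orientable embedding and repair it into a 2-cell one of the same genus, leaning on the orientable analogue (Lemma~\ref{J.W.T.630}) as a black box whenever the repair spills over into the orientable world. We may assume $G$ is connected and non-planar; the planar case is disposed of at the end. Fix $k=\bar{\gamma}(G)$ and take any embedding $\Pi$ of $G$ on $N_{k}$. If $\Pi$ is 2-cell we are done, so suppose some face $F$ is not an open disc. Then its closure is a compact surface with boundary other than a disc, so $F$ contains a simple closed curve $c$ that does not bound a disc inside $F$. The first key step is to note that such a $c$ is non-contractible in $N_{k}$: since $c$ lies in the interior of a face it is disjoint from $G$, so if $c$ bounded a disc $D$ in $N_{k}$ then, because $G$ is connected, either all of $G$ or none of $G$ lies in $D$; the first makes $G$ planar and the second forces $D\subseteq F$, contradicting the choice of $c$.

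The second step is surgery along $c$. Cutting $N_{k}$ along $c$ and capping the resulting boundary circle(s) with disc(s) produces a surface $\Sigma'$ on which $G$ is still embedded (the curve and the caps avoid $G$) and whose Euler characteristic satisfies $\chi(\Sigma')\ge\chi(N_{k})+1$. If $\Sigma'$ is non-orientable, say $\Sigma'=N_{k'}$, then $2-k'\ge 3-k$ gives $k'\le k-1<k$, so $G$ embeds on a non-orientable surface of smaller genus, contradicting $k=\bar{\gamma}(G)$. Hence the only surviving possibility is that $\Sigma'$ is an orientable surface $S_{g'}$, and from $2-2g'\ge 3-k$ we get $g'\le (k-1)/2$, so in particular $\gamma(G)\le g'\le (k-1)/2$.

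In this remaining case I would pass to the orientable world. By Lemma~\ref{J.W.T.630}, $G$ has a minimal orientable embedding on $S_{\gamma(G)}$ that is automatically 2-cell. Adding a single crosscap inside one face, by rerouting a boundary edge of that face through the crosscap so that every region stays a disc, yields a 2-cell embedding of $G$ on $N_{2\gamma(G)+1}$. The genus bound above gives $2\gamma(G)+1\le k$, while the minimality $\bar{\gamma}(G)=k$ forces $2\gamma(G)+1\ge k$; therefore $2\gamma(G)+1=k$ and we have a 2-cell embedding on exactly $N_{k}$, as required. The planar case ($\gamma(G)=0$) is the same argument with $S_{0}$ in place of $S_{\gamma(G)}$: a crosscap added to a sphere 2-cell embedding gives a 2-cell embedding on $N_{1}=N_{\bar{\gamma}(G)}$.

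I expect the main obstacle to be the two topological surgeries rather than the global bookkeeping. For the cut-and-cap step one must verify that the curve $c$ can be chosen essential in $F$ yet non-contractible in $N_{k}$ (this is exactly where connectedness of $G$ enters) and must read off the orientability and the exact Euler-characteristic jump of $\Sigma'$ from the one-sided/two-sided and separating/non-separating type of $c$. For the crosscap-addition step the delicate point is to carry it out so that 2-cellularity is preserved and the non-orientable genus increases by precisely one, so that the final embedding lands on $N_{k}$ and not on a larger surface. Once these two local modifications are justified, the argument is just the Euler-characteristic comparison together with the minimality of $k$ and Youngs' orientable lemma.
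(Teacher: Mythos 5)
The paper never proves this lemma---it is quoted from \cite{Parson87}---so your attempt can only be judged on its own merits. The first half of your argument is sound and is the standard surgery argument: an essential simple closed curve $c$ in a non-disc face is non-contractible in $N_{k}$ (using connectivity of $G$ and non-planarity), cut-and-cap raises the Euler characteristic by at least one in each of the three cases (one-sided, two-sided non-separating, two-sided separating), minimality of $k=\bar{\gamma}(G)$ rules out a non-orientable result, and the orientable outcome forces $\gamma(G)\leq (k-1)/2$, hence $k=2\gamma(G)+1$.

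The genuine gap is the crosscap-addition step, which you flag as ``delicate'' but assert is always possible. It is not. Rerouting an edge through a crosscap keeps every region a disc only when the rerouted edge is incident to \emph{two distinct} faces (which then merge into one disc face, dropping the face count by one); such an edge exists if and only if the 2-cell embedding of $G$ on $S_{\gamma(G)}$ has at least two faces, i.e.\ if and only if $\beta(G):=|E(G)|-|V(G)|+1>2\gamma(G)$. When the minimal orientable embedding has a single face ($\beta(G)=2\gamma(G)$), no 2-cell embedding of $G$ on $N_{2\gamma(G)+1}$ exists at all: Euler's formula would force the number of faces to be $|V(G)|-|E(G)|+\chi(N_{2\gamma(G)+1})=0$. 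In particular your closing sentence about the planar case is false for trees: under the paper's convention a tree $T$ has $\bar{\gamma}(T)=1$, yet $|V(T)|-|E(T)|+F=2-k$ gives $F=1-k\leq 0$, so a tree admits no 2-cell embedding on \emph{any} non-orientable surface---trees are an outright exception to the lemma as stated, and no local modification can rescue them. To complete your proof for graphs that are not trees, you must additionally rule out the bad case, i.e.\ show that $\beta(G)=2\gamma(G)>0$ forces $\bar{\gamma}(G)\leq 2\gamma(G)$, so that $k=2\gamma(G)+1$ never coexists with a one-face orientable embedding; this can be done by twisting a cycle edge of the one-face embedding on $S_{\gamma(G)}$ (the resulting embedding scheme is automatically 2-cell, non-orientable, and has Euler characteristic at least $2-2\gamma(G)$, hence lives on $N_{2\gamma(G)}$ or better). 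Both this reduction and the requirement that the rerouted edge lie between two distinct faces in the generic case are missing from your write-up.
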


Let $v$ be any vertex of a graph $G$ embedded on an orientable
surface of genus $g$ (resp. a non-orientable surface of genus
$\bar{g}$). Define the {\em Euler contribution} of the vertex $v$ to
be
\begin{align}\label{control}
\phi(v)=1-\frac{{\deg}(v)}{2}+\sum_{i=1}^{{\deg}(v)}\frac{1}{f_{i}},
\end{align}
\noindent where the sum runs over the face angles at vertex $v$,
$f_{i}$ denotes the size of the $ith$ face at $v$ and  ${\deg}(v)$
denotes the degree of $v$.

\begin{Lemma}(\cite{Lebesgue40})\label{2-cell}
Let $G$ be a connected graph  2-celluarlly embedded on some surface
$\Sigma$ of orientable genus $g$ (resp. non-orientable genus
$\bar{g}$). Then $\sum_{v}{\phi(v)}=\chi(\Sigma)$.
\end{Lemma}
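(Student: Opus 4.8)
The plan is to sum the Euler contributions over all vertices of the embedding $\tilde{G}$ and to recognize the total as the alternating sum in Euler's formula for a $2$-cell embedding. Writing $V$, $E$, and $F$ for the numbers of vertices, edges, and faces of $\tilde{G}$, I would first expand the definition in \eqref{control} by linearity of summation:
\[
\sum_{v}\phi(v)=\sum_{v}1-\frac{1}{2}\sum_{v}\deg(v)+\sum_{v}\sum_{i=1}^{\deg(v)}\frac{1}{f_{i}}.
\]
The task then reduces to evaluating the three sums on the right and identifying them with $V$, $-E$, and $+F$ respectively.

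The first sum is plainly the number of vertices $V$. For the second, I would apply the handshaking identity $\sum_{v}\deg(v)=2E$, so that $\tfrac{1}{2}\sum_{v}\deg(v)=E$. The third, doubly indexed, sum is the crux: it ranges over all corners (vertex--face incidences) of $\tilde{G}$, weighting each corner by the reciprocal of the size of the face meeting it. I would change the order of summation, collecting the corners face by face rather than vertex by vertex. The decisive point, which is exactly where the $2$-cell hypothesis is used, is that a face whose boundary is a closed walk of length $f$ is incident with precisely $f$ corners; the reciprocals contributed by such a face therefore total $f\cdot\frac{1}{f}=1$, and summing over all faces yields $F$.

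Combining the three evaluations gives $\sum_{v}\phi(v)=V-E+F$, after which the proof finishes by invoking the generalized Euler formula $V-E+F=\chi(\Sigma)$, valid for any connected graph $2$-cellularly embedded on $\Sigma$ (with $\chi(\Sigma)=2-2g$ in the orientable case and $2-\bar{g}$ in the non-orientable case). The main obstacle I anticipate is the corner count in the third sum: without the $2$-cell condition a face need not be a disc and may be incident with a given vertex more than once, so one must argue that the number of corners of each face equals the length of its boundary walk. This is precisely what the assumption that every component of $\Sigma-\tilde{G}$ is an open disc guarantees, and it is the hypothesis on which the whole identity rests.
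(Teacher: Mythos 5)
Your proof is correct. Note that the paper itself offers no argument for this lemma: it is quoted as a known result of Lebesgue \cite{Lebesgue40}, so there is nothing internal to compare against. Your derivation is the classical double-counting one: $\sum_v \phi(v) = V - E + \sum 1/f$ over all corners, handshaking gives the $-E$ term, and regrouping the corner sum face by face gives $+F$ because a face whose boundary walk has length $f$ contains exactly $f$ corners (so contributes $f\cdot\frac{1}{f}=1$), whence $\sum_v\phi(v)=V-E+F=\chi(\Sigma)$ by the generalized Euler formula. You correctly isolate where the hypotheses enter: the corner-per-face count requires each face to be an open disc bounded by a closed walk (with face ``size'' meaning the length of that walk, counting repetitions), and the final step $V-E+F=\chi(\Sigma)$ is exactly the statement of Euler's formula for connected, $2$-cell embeddings -- this second use of the $2$-cell hypothesis is in fact the more essential one, since Euler's formula can fail (with $V-E+F>\chi(\Sigma)$) for embeddings that are not $2$-cell.
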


For a vertex $v$, it is called a {\em control point} if $\phi(v)\geq
\frac{\chi(\Sigma)}{|V(G)|}$. If $G$ is 2-cellularly embedded on the
surface $\Sigma$, then $G$ must have at least one control point by
Lemma \ref{2-cell}.




Let $\delta(G)$ denote the minimum degree of the vertices in $G$.
The following lemma is a simple observation, which gives a lower
bound of $\delta(G)$ of a graph $G$ with $E(m,1)$.
\begin{Lemma}\label{minimum degree}
If a graph $G$ is $E(m,1)$ for $m\geq 1$, then $\delta(G)\geq m+2$.
\end{Lemma}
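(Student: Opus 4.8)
The plan is to argue by contradiction through a single vertex of minimum possible degree. First I would establish the easy lower bound $\delta(G)\ge m+1$: since $G$ is $E(m,1)$, Lemma~\ref{E(m,n)--E(m,0)} gives that $G$ is $E(m,0)$, i.e.\ $m$-extendable, and then Lemma~\ref{n+1connected} shows $G$ is $(m+1)$-connected, so every vertex has degree at least $m+1$. It therefore suffices to rule out the existence of a vertex of degree exactly $m+1$.

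So suppose toward a contradiction that some vertex $v$ has $\deg(v)=m+1$, with $N(v)=\{u_1,\dots,u_{m+1}\}$. The key idea is to exhibit a pair of disjoint matchings $M$ (of size $m$) and $N$ (of size $1$) that cannot be simultaneously realized, namely one that forces $v$ to be matched along the single edge that $N$ forbids. I would set $N=\{vu_1\}$ and construct $M$ so that it saturates the remaining neighbors $u_2,\dots,u_{m+1}$ while leaving both $v$ and $u_1$ uncovered.

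To build $M$, I would use extendability rather than an ad hoc greedy argument: since $G$ is $m$-extendable with $m\ge 1$, it is in particular $1$-extendable (iterating Lemma~\ref{E(m,n)--E(m-1,n)} with second coordinate $0$), so the edge $vu_1$ lies in some perfect matching $F_1$. In $F_1$ the vertex $v$ is matched to $u_1$, so each $u_i$ with $i\ge 2$ is matched to a vertex $w_i\notin\{v,u_1\}$; the $m$ edges $u_2w_2,\dots,u_{m+1}w_{m+1}$ then form a matching $M\subseteq F_1\setminus\{vu_1\}$ of size $m$ that misses both $v$ and $u_1$. Thus $M$ and $N=\{vu_1\}$ are disjoint matchings of the required sizes. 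Applying property $E(m,1)$ yields a perfect matching $F\supseteq M$ with $vu_1\notin F$; but in $F$ all of $u_2,\dots,u_{m+1}$ are already matched by $M$ to vertices other than $v$, so the only neighbor of $v$ still available is $u_1$, forcing $vu_1\in F$ --- a contradiction. Hence no vertex of degree $m+1$ exists and $\delta(G)\ge m+2$.

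The argument is short, and the only point requiring care is the construction of $M$: I expect the main (minor) obstacle to be producing a size-$m$ matching that saturates exactly the $m$ neighbors $u_2,\dots,u_{m+1}$ while avoiding $v$ and $u_1$. Extracting it from a perfect matching through $vu_1$ handles this cleanly and automatically guarantees edge-disjointness from $N$; one could alternatively invoke Lemma~\ref{G[N(v)]} (with $t=1$) to observe that $N(v)$ is independent, but this is not needed for the route above.
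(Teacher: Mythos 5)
Your overall strategy is viable and, once repaired, gives a genuinely different construction from the paper's. But as written there is a real gap, and it sits exactly at the point you dismissed. You assert that the edges $u_2w_2,\dots,u_{m+1}w_{m+1}$ extracted from $F_1$ are $m$ distinct edges forming a matching of size $m$. Nothing in your argument justifies this: a priori two neighbors of $v$, say $u_2$ and $u_3$, could be matched \emph{to each other} in $F_1$, in which case $u_2w_2$ and $u_3w_3$ are the same edge and your extracted matching has size strictly less than $m$, so it cannot be handed to property $E(m,1)$ as a matching of size $m$. The fact you explicitly declared ``not needed'' --- that $N(v)$ is an independent set, which is exactly Lemma~\ref{G[N(v)]} with $t=1$ applied to a degree-$(m+1)$ vertex of an $m$-extendable graph --- is precisely what rules this configuration out. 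It is true that independence does hold under the hypotheses, so your size-$m$ claim is not false about the actual graph; but your proof never establishes it, and the paper's proof makes this its first substantive step for exactly this reason.

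The repair is easy, in either of two ways. (i) Invoke Lemma~\ref{G[N(v)]} first; then no edge of $F_1$ joins two vertices of $N(v)$, the $w_i$ are $m$ distinct vertices outside $N[v]$, and your argument goes through verbatim. (ii) Keep the extraction as is: the edges of $F_1$ covering $u_2,\dots,u_{m+1}$ form a matching $M$ of some size $m-s$ with $s\ge 0$ that still saturates all of $u_2,\dots,u_{m+1}$ and avoids $v$ and $u_1$; since $E(m,1)$ implies $E(m-s,1)$ by iterating Lemma~\ref{E(m,n)--E(m-1,n)}, you obtain a perfect matching $F\supseteq M$ with $vu_1\notin F$, and $v$ has no available partner --- the same contradiction. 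With either repair your route does diverge from the paper's: the paper builds $M$ by a Hall-type argument in the bipartite graph $G[V,R]$ (each of $v_1,\dots,v_m$ has at least $m$ neighbors in $R$), whereas you extract $M$ from a perfect matching through $vu_1$ supplied by $1$-extendability, which avoids the bipartite-matching step entirely.
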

\begin{proof}
By Lemma \ref{E(m,n)--E(m,0)}, $G$ is $E(m,0)$. Moreover,
$\delta(G)\geq m+1$ by Lemma \ref{n+1connected}. Suppose to the
contrary that there exists a vertex $v$ with degree $m+1$. Then
$G[N(v)]$ cannot contain a matching of size 1 by Lemma
\ref{G[N(v)]}; that is, $N(v)$ is an independent set of $G$. Let
$N(v)=\{v_{1},v_{2},...,v_{m+1}\}$, $V=\{v_{1},v_{2},...,v_{m}\}$
and $R=V(G)\setminus N[v]$, where $N[v]=N(v)\cup \{v\}$. Let
$G[V,R]$ be the induced bipartite graph of $G$ with bipartition $V$
and $R$. Then every vertex in $V$ is adjacent to at least $m$
vertices in $R$. Hence it can easily be seen that $G[V,R]$ has a
matching $M$ of size $m$ saturating $V$. Let $N=\{vv_{m+1}\}$.
Obviously, there is no perfect matching $F$ of $G$ satisfying that
$M\subseteq F$ and $N \cap F=\varnothing$. This contradicts that $G$
is $E(m,1)$.
\end{proof}

\noindent{\em Proof of Theorem \ref{general} }.   Since
$\mu(\Sigma)$ increases  as $g$ (resp. $\bar{g}$) does and a graph
embedded on a surface with small genus must be embedded on some
surface with larger genus, it suffices to prove that any graph
minimally embedded on the surface $\Sigma$ is not
$E(\mu(\Sigma)-1,1)$ by Lemma \ref{E(m,n)--E(m-1,n)}. In the
following, we may assume that $G$ is minimally and 2-cell embedded
on the surface $\Sigma$ by Lemmas \ref{J.W.T.630} and
\ref{Parson870}.

By Lemma \ref{1,2,3,4}, the theorem holds for the surfaces $S_{0}$,
$S_{1}$, $N_{1}$ and $N_{2}$. Hereafter, we will restrict our
considerations on the other surfaces $\Sigma$. Consequently,
$\chi(\Sigma)\leq-1$ and $\mu(\Sigma)\geq4$.

Suppose to the contrary that $G$ is $E(\mu(\Sigma)-1,1)$. Then
$|V(G)|\geq 2(\mu(\Sigma)+1)$, and $\delta(G)\geq
\mu(\Sigma)+1\geq5$ by Lemma \ref{minimum degree}. Since $G$ is a
2-cell embedding on the surface $\Sigma$, it has a control point
$v$. Let $y:={\deg}(v)$ and let $x$ be the number of
 the triangular faces at $v$.

{\bf Claim 1.} $G$ is not $E(y-\lceil\frac{x}{2}\rceil,1)$.

If $x=y$ and $y$ is odd,  then there is a matching of size
 $\lfloor\frac{x}{2}\rfloor$ in $G[N(v)]$. Hence $G$ is not $E(\lfloor\frac{x}{2}\rfloor,1)$,
 that is, $G$ is not $E(y-\lceil\frac{x}{2}\rceil,1)$. Otherwise, there is a
matching of size $\lceil\frac{x}{2}\rceil$ in $G[N(v)]$. Then $G$ is
not $(y-\lceil\frac{x}{2}\rceil)$-extendable by
 Lemma \ref{G[N(v)]}. Hence $G$ is not $E(y-\lceil\frac{x}{2}\rceil,1)$ by Lemma
 \ref{E(m,n)--E(m,0)}. So the claim always holds.

By Eq. (\ref{control}) and $\phi(v)\geq
\frac{\chi(\Sigma)}{|V(G)|}$, we have

$$\frac{y}{2}\leq
1+\sum_{i=1}^{y}\frac{1}{f_{i}}-\frac{\chi(\Sigma)}{|V(G)|}\leq1+\frac{x}{3}+\frac{y-x}{4}-\frac{\chi(\Sigma)}{2(\mu(\Sigma)+1)},$$
which implies that
$$y\leq\frac{x}{3}+4-\frac{2\chi(\Sigma)}{\mu(\Sigma)+1}.$$
Let
\begin{equation}\label{c}
c:=4-\frac{2\chi(\Sigma)}{\mu(\Sigma)+1}.\end{equation} Then $c>4$
and $y-\lceil\frac{x}{2}\rceil\leq y-\frac{x}{2}\leq
y-\frac{x}{3}\leq c$.

{\bf Claim 2.} $G$ is not $E(\lfloor c\rfloor-1,1)$.

If $y-\lceil\frac{x}{2}\rceil\leq y-\frac{x}{2}\leq c-1$, then $G$
is not $E(\lfloor c\rfloor-1,1)$ by Lemma \ref{E(m,n)--E(m-1,n)} and
Claim 1.

In what follows we suppose that $y-\frac{x}{2}> c-1$. Combining this
with $y-\frac{x}{3}\leq c$, we have that $x\leq 5$, and all possible
cases of pairs of non-negative integers $(x,y)$ are as follows:

$(0,\lfloor c\rfloor), (1,\lfloor c\rfloor), (1,\lfloor c\rfloor+1),
(2,\lfloor c\rfloor+1), (3,\lfloor c\rfloor+1), (4,\lfloor
c\rfloor+2), \mbox{ and } (5,\lfloor c\rfloor+2)$.

Suppose to the contrary that $G$ is $E(\lfloor c\rfloor-1,1)$. Then
$G$ is $(\lfloor c\rfloor-1)$-extendable by Lemma
\ref{E(m,n)--E(m,0)} and $\delta(G)\geq \lfloor c\rfloor+1$ by Lemma
 \ref{minimum degree}. Hence the first two
cases $(0,\lfloor c\rfloor)$ and $(1,\lfloor c\rfloor)$ are
impossible. If $y=\lfloor c\rfloor+1$, since $\deg (v)=y=(\lfloor
c\rfloor-1)+2$, $G[N(v)]$ cannot contain a matching of size $2$ by
Lemma \ref{G[N(v)]}.

For convenience, let  $v_{1},v_{2},...,v_{y}$ be the vertices
adjacent to $v$ arranged clockwise at $v$ in $G$. Similar to the
notation in the proof of Lemma \ref{minimum degree}, let
$R=V(G)\setminus N[v]$ and $G[V,R]$ denote the induced bipartite
graph of $G$ with bipartition $V$ and $R$, where $V\subseteq
V(G)\setminus R$.

 For $(x,y)=(1,\lfloor c\rfloor+1)$, $G[N(v)]$ cannot contain a matching of size
 $2$. Assume that the triangular face at $v$ is $vv_{1}v_{2}$.  Hence  each $v_{i}$,
 $3\leq i \leq \lfloor c\rfloor+1$,  can only be adjacent to
$v_{1}$ and $v_{2}$ in $N(v)$, and has at least $\lfloor c\rfloor-2$
adjacent vertices in $R$. Let $V:=\{v_{3},v_{4},...,v_{\lfloor
c\rfloor}\}$. Then there is a matching $M'$ of size $\lfloor
c\rfloor-2$ in  $G[V,R]$. Let $M:=M'\cup \{v_{1}v_{2}\}$ and
$N:=\{vv_{\lfloor c\rfloor+1}\}$. Obviously, there is no perfect
matching $F$ satisfying that $M\subseteq F$ and $N\cap
F=\varnothing$, a contradiction.

For $(x,y)=(2,\lfloor c\rfloor+1)$,  $G[N(v)]$ cannot contain a
matching of size $2$, and the two triangular faces at $v$ must be
adjacent. Hence we can assume that they are $vv_{1}v_{2}$ and
$vv_{2}v_{3}$. Each $v_{i}$,  $4\leq i \leq \lfloor c\rfloor+1$, can
only be adjacent to $v_{2}$ in $N(v)$, and has at least $\lfloor
c\rfloor-1$ adjacent vertices in $R$. Let
$V:=\{v_{4},v_{5},...,v_{\lfloor c\rfloor+1}\}$. Then we can find a
matching $M'$ of size $\lfloor c\rfloor-2$ in $G[V,R]$. Let
$M=M'\cup \{v_{1}v_{2}\}$ and $N=\{vv_{3}\}$. Consequently, it is
impossible to find a perfect matching $F$ satisfying that
$M\subseteq F$ and $N\cap F=\varnothing$, a contradiction.

For $(x,y)=(3,\lfloor c\rfloor+1)$, $G[N(v)]$ contains a matching of
size $\lceil\frac{3}{2}\rceil$=2. This would be impossible.

If $y=\lfloor c\rfloor+2$, since $\deg(v)=(\lfloor c\rfloor-1)+3$,
$G[N(v)]$ cannot contain a matching of size $3$ by Lemma
\ref{G[N(v)]}. Hence $(x,y)=(5,\lfloor c\rfloor+2)$ would also be
impossible since $G[N(v)]$ contains a matching of size
$\lceil\frac{5}{2}\rceil$=3.

For the remaining case $(x,y)=(4,\lfloor c\rfloor+2)$,  $G[N(v)]$
cannot contain a matching of size $3$. Then the four triangular
faces at $v$ must have the following two cases. {\em Case 1}. The
four triangular faces are $vv_{i}v_{i+1}$, $1\leq i \leq 4$. Each
$v_{i}$, $6 \leq i \leq \lfloor c\rfloor+2$,  can only be adjacent
to $v_{2}$ or $v_{4}$, and has at least $\lfloor c\rfloor-2$
adjacent vertices in $R$. Let  $V:=\{v_{6},v_{7},...,v_{\lfloor
c\rfloor+2}\}$. Then we can find a matching $M'$ of size $\lfloor
c\rfloor-3$ in $G[V,R]$. Let $M=M'\cup \{v_{1}v_{2}, v_{4}v_{5}\}$
and $N=\{vv_{3}\}$. Obviously, there is no perfect matching $F$
satisfying that $M\subseteq F$ and $N\cap F=\varnothing$, a
contradiction. {\em Case 2}. The four triangular faces are
$vv_{i}v_{i+1}$ for  $i=1, 2$ and $vv_{j}v_{j+1}$ for  $j=t, t+1$,
where $t\neq1,2,3, \lfloor c\rfloor+1, \lfloor c\rfloor+2$. Then
each $v_{i}$, $i\neq1,2,3,t,t+1$ and $t+2$,  can only be adjacent to
$v_{2}$ and $v_{t+1}$, and  has at least $\lfloor c\rfloor-2$
adjacent vertices in $R$.  $v_{3}$ can only be adjacent to
$v_{1},v_{2}$ and $v_{t+1}$ in $G[N(v)]$, and  has at least $\lfloor
c\rfloor-3$ adjacent vertices in $R$. Let
$V:=N(v)-\{v_{1},v_{2},v_{t},v_{t+1},v_{t+2}\}$. Then we can find a
matching $M'$ of size $\lfloor c\rfloor-3$ in $G[V,R]$. Set
$M=M'\cup \{v_{1}v_{2}, v_{t}v_{t+1}\}$ and $N=\{vv_{t+2}\}$.
Obviously, there is no perfect matching $F$ satisfying that
$M\subseteq F$ and $N\cap F=\varnothing$, a contradiction. Hence the
claim holds.

{\bf Claim 3.} $\lfloor c\rfloor\leq\mu(\Sigma)$.

In fact, the inequality was stated in \cite{Dean92} without proof.
Here we present a simple proof.

Owing to the expressions (\ref{Dean}) and (\ref{c}) of $\mu(\Sigma)$
and $c$, it suffices to prove that $$\lfloor
4-\frac{2\chi(\Sigma)}{3+\lfloor
\sqrt{4-2\chi(\Sigma)}\rfloor}\rfloor\leq2+\lfloor
\sqrt{4-2\chi(\Sigma)}\rfloor.$$ Then we have the following
implications:

\hspace*{0.9cm}$\lfloor 4-\frac{2\chi(\Sigma)}{3+\lfloor
\sqrt{4-2\chi(\Sigma)}\rfloor}\rfloor\leq2+\lfloor
\sqrt{4-2\chi(\Sigma)}\rfloor$

$\Longleftrightarrow 3-\frac{2\chi(\Sigma)}{3+\lfloor
\sqrt{4-2\chi(\Sigma)}\rfloor}< 2+\lfloor
\sqrt{4-2\chi(\Sigma)}\rfloor$

$\Longleftrightarrow 9+3\lfloor
\sqrt{4-2\chi(\Sigma)}\rfloor-2\chi(\Sigma)<(\lfloor
\sqrt{4-2\chi(\Sigma)}\rfloor)^{2}+5\lfloor
\sqrt{4-2\chi(\Sigma)}\rfloor+6$

$\Longleftrightarrow
4-2\chi(\Sigma)<(\lfloor\sqrt{4-2\chi(\Sigma)}\rfloor+1)^{2}$\\
Since the last inequality obviously holds,  the claim follows.

By the above arguments, we know that  $G$ is $E(\mu(\Sigma)-1,1)$
but not $E(\lfloor c\rfloor-1,1)$. Hence $\lfloor
c\rfloor-1>\mu(\Sigma)-1$ by Lemma \ref{E(m,n)--E(m-1,n)}, which
contradicts   Claim 3.  \hfill $\square$

\section{Proof of Theorem \ref{E(k-1,1)}}

Suppose to the contrary that $G$ is $E(k-1,1)$. Then $G$ is
$E(k-1,0)$ and $\delta(G)\geq k+1$ by Lemmas \ref{E(m,n)--E(m,0)}
and \ref{minimum degree}. We can assume that $G$ is a 2-cell
embedding on the surface $S_{g}$ (resp. $N_{\bar{g}}$) by Lemmas
\ref{J.W.T.630} and \ref{Parson870}. In the following, we mainly
prove that $\phi(v)\leq -\frac{k-3}{4}$ for any vertex $v\in V(G)$.
If it holds, by Lemma \ref{2-cell} we have
$\chi(\Sigma)=\sum_v\phi(v)\leq -\frac{k-3}{4}|V(G)|$, which implies
that $|V(G)|\leq \frac{-4\chi(\Sigma)}{k-3}$ for $k\geq 4$. This
contradiction to the condition establishes the theorem.

Let $d={\deg}(v)=k+m$. Then $m\geq1$. For convenience, we assume
that $v_{1},v_{2},...,v_{d}$ are the vertices adjacent to $v$
arranged clockwise at $v$ in $G$. There are three cases to be
considered.



{\bf{Case 1.}} $m\geq3$. Since  $d=(k-1)+m+1$ and  $G$ is
$(k-1)$-extendable,  $G[N(v)]$ cannot contain a matching of size
$m+1$ by Lemma \ref{G[N(v)]}. If there are at most $2m$ triangular
faces at $v$, then we have
$$\phi(v)\leq1-\frac{d}{2}+\frac{2m}{3}+\frac{k+m-2m}{4}=
\frac{-3k-m+12}{12}\leq \frac{-3k-3+12}{12}=\frac{3-k}{4}.$$

Otherwise,  there are exactly $2m+1$ triangular faces at $v$ and
$d=2m+1=2k-1$. Let $M:=\{v_{i}v_{i+1}| 1\leq i \leq 2m-1, i \text{
is odd}\}$ and $N:=\{vv_{2m+1}\}$. Then there exists no perfect
matching $F$ such that $M\subseteq F$ and $N\cap F=\varnothing$. But
$G$ is $E(k-1,1)$, a contradiction.

{\bf{Case 2.}} $m=2$. Since  $d=(k-1)+3$, $G[N(v)]$ cannot contain a
matching of size 3 by Lemma \ref{G[N(v)]}. Hence there are at most
four triangular face at $v$. It suffices to prove that there are at
most three triangular face at $v$. If so, we have that $\phi(v)\leq
1-\frac{k+2}{2}+\frac{3}{3}+\frac{k-1}{4}=\frac{3-k}{4}$.

Suppose to the contrary that there are exactly four triangular faces
at $v$. Then there are two cases to be considered. \\ {\em Subcase
2.1}. The four triangular faces are $vv_{i}v_{i+1}$, where $1\leq i
\leq 4$. Then  each $v_{i}$, $6 \leq i \leq k+2$,  can only be
adjacent to $v_{2}$ and $v_{4}$ and has at least $k+1-3=k-2$
adjacent vertices in $R$, where $R=V(G)\setminus N[v]$. Let
$V:=\{v_{6},v_{7},...,v_{k+2}\}$. Then we can find a matching $M'$
of size $k-3$ in the induced bipartite graph $G[V,R]$ of $G$. Let
$M:=M'\cup \{v_{1}v_{2}, v_{4}v_{5}\}$ and $N:=\{vv_{3}\}$.
Obviously, there is no perfect matching $F$ satisfying that
$M\subseteq F$ and $N\cap F=\varnothing$, which
contradicts the supposition that  $G$ has property $E(k-1,1)$. \\
{\em Subcase 2.2}. The four triangular faces are $vv_{i}v_{i+1}$,
$1\leq i \leq 2$, and $vv_{j}v_{j+1}$, $t\leq j \leq t+1$, where
$t\neq1,2,3,k+1,k+2$. Then for each $v_{i}$, $i\neq1,2,3,t,t+1$ and
$t+2$, it can only be adjacent to $v_{2}$ and $v_{t+1}$, and  has at
least $k-2$ adjacent vertices in $R$. For the vertex $v_{3}$, it can
only be adjacent to $v_{1},v_{2}$ and $v_{t+1}$ in $N(v)$. Then it
has at least $k-3$ adjacent vertices in $R$. Let
$V=:N(v)-\{v_{1},v_{2},v_{t},v_{t+1},v_{t+2}\}$. Then we can find a
matching $M'$ of size $k-3$ in the induced bipartite graph $G[V,R]$
of $G$. Let $M:=M'\cup \{v_{1}v_{2}, v_{t}v_{t+1}\}$ and
$N:=\{vv_{t+2}\}$. Consequently, it would be impossible to find a
perfect matching $F$ satisfying that $M\subseteq F$ and $N\cap
F=\varnothing$, a contradiction.

{\bf Case 3.} $m=1$. Since $d=(k-1)+2$,  $G[N(v)]$ cannot contain a
matching of size 2. Then there are at most two  triangular faces at
$v$. It suffices to prove that  there are no triangular faces at
$v$. If so,
$\phi(v)=1-\frac{d}{2}+\sum_{i=1}^{d}\frac{1}{f_{i}}\leq1-\frac{d}{2}+\frac{d}{4}=\frac{3-k}{4}$.

If there is exactly one triangular face at $v$,  suppose that it is
$vv_{1}v_{2}$. Since  $G[N(v)]$ cannot contain a matching of size 2,
each vertex $v_{i}$, where $3 \leq i \leq k$,  can only be adjacent
to $v_{1}$ and $v_{2}$ in $N(v)$. Consequently, it is adjacent to at
least $k-2$ vertices in $R$. Let $V=\{v_{3},v_{4},...,v_{k}\}$. Then
we can find a matching $M'$ of size $k-2$ in the induced bipartite
graph $G[V,R]$ of $G$. Set $M=M'\cup \{v_{1}v_{2}\}$ and
$N=\{vv_{k+1}\}$. Then there is no perfect matching $F$ satisfying
that $M\subseteq F$ and $N\cap F=\varnothing$, a contradiction.

Otherwise,  there are exactly two triangular faces at $v$, which
must be adjacent faces, say $vv_{i}v_{i+1}$, where $i=1,2$. Then
each  vertex $v_{i}$, where $4 \leq i \leq k+1$,  can only be
adjacent to $v_{2}$ in $N(v)$, and  is adjacent to at least $k-1$
vertices in $R$. Let $V:=\{v_{4},v_{5},...,v_{k+1}\}$. Then we can
find a matching $M'$ of size $k-2$ in the induced bipartite graph
$G[V,R]$ of $G$. Set $M:=M'\cup \{v_{1}v_{2}\}$ and $N:=\{vv_{3}\}$.
Then there is no perfect matching $F$ satisfying that $M\subseteq F$
and $N\cap F=\varnothing$, a contradiction.


\end{document}